\theoremstyle{plain}
\newtheorem{theorem}{Theorem}[section]
\newtheorem{case}{Case}
\newtheorem{corollary}[theorem]{Corollary}
\newtheorem{lemma}[theorem]{Lemma}
\newtheorem{proposition}[theorem]{Proposition}
\newtheorem{theorem-definition}[theorem]{Theorem-Definition}
\theoremstyle{definition}
\newtheorem{definition}[theorem]{Definition}
\newtheorem{notation}[theorem]{Notation}
\newtheorem{convention}[theorem]{Convention}
\theoremstyle{remark}
\newtheorem{example}[theorem]{Example}
\numberwithin{equation}{section}
\newcommand{\lac}{{\mathsf{L}^\lambda_{\mathcal{T}_{\mathcal{C}}}}}
\newcommand{\ldc}{{\mathsf{L}_{\mathsf{d}_{\mathcal{C}}}}}
\newcommand{\dc}{{\mathsf{d}_{\mathcal{C}}}}
\newcommand{\dd}{{\mathsf{d}}}
\newcommand{\N}{{\mathds{N}}}
\newcommand{\R}{{\mathds{R}}}
\newcommand{\C}{{\mathds{C}}}
\newcommand{\D}{{\mathfrak{D}}}
\newcommand{\A}{{\mathfrak{A}}}
\newcommand{\B}{{\mathfrak{B}}}
\newcommand{\Lip}{{\mathsf{L}}}
\newcommand{\Kantorovich}[1]{{\mathsf{mk}_{#1}}}
\newcommand{\mongekant}{{Mon\-ge-Kan\-to\-ro\-vich metric}}
\newcommand{\sa}[1]{{\mathfrak{sa}\left({#1}\right)}}
\newcommand{\dom}[1]{{\operatorname*{dom}({#1})}}
\renewcommand{\geq}{\geqslant}
\renewcommand{\leq}{\leqslant}
\newcommand{\Latremoliere}{Latr\'{e}moli\`{e}re}
\newcommand{\vast}{\bBigg@{4}}
\newcommand{\Vast}{\bBigg@{5}}
\begin{document}

\title[A quantum metric on the Cantor space]{A quantum metric on the Cantor Space}\author{Konrad Aguilar}
\address{School of Mathematical and Statistical Sciences, Arizona State University, 901 S. Palm Walk, Tempe, AZ 85287-1804}
\email{konrad.aguilar@asu.edu}
\urladdr{https://math.la.asu.edu/~kaguilar/}

\author{Alejandra L\'opez}
\address{Department of Mathematics, Purdue University,
150 N. University Street, West Lafayette, IN 47907-2067}
\email{lopez408@purdue.edu}
\urladdr{}

\date{\today}
\subjclass[2010]{Primary:  46L89, 46L30, 58B34.}
\keywords{Noncommutative metric geometry, Quantum Metric Spaces, Lip-norms, C*-algebras, Cantor space}

\begin{abstract}
The first author and \Latremoliere{} had introduced a quantum metric (in the sense of Rieffel) on the algebra of complex-valued continuous functions on the Cantor space. We show that this quantum metric is distinct from the quantum metric induced by a classical metric on the Cantor space. We accomplish this by showing that the seminorms  induced by each quantum metric (Lip-norms) are distinct on a dense subalgebra of the algebra of complex-valued continuous functions on the Cantor space. In the process, we develop formulas for each Lip-norm on this dense subalgebra and show these Lip-norms agree on  a Hamel basis of this subalgebra. Then, we use these formulas to find families of elements for which these Lip-norms disagree.
\end{abstract}
\maketitle

\setcounter{tocdepth}{3}
\tableofcontents

\section{Introduction and Background}

The study of compact quantum metric spaces was introduced by Rieffel \cite{Rieffel98a, Rieffel05} to establish metric convergence of certain noncommutative algebras. This metric convergence also serves as noncommutative analogue to the Gromov-Hausdorff distance \cite{burago01} which provides metric convergence of sequences {\em of} compact metric spaces \cite{Rieffel00}. This was motivated by a desire to formalize convergence of certain noncommutative algebras introduced in the high-energy physics literature \cite{Rieffel00}. Another aspect of this theory produced a way to study finite-dimensional approximations of infinite dimensional algebras using this strong form of metric convergence. Therefore, although this theory was developed for noncommutative algebras, the pursuit of metric finite-dimensional approximations meant that this theory could be of interest to study finite-dimensional approximations of commutative algebras. 

Producing metric finite-dimensional approximations for some commutative algebras was one of the consequences of the work of the first author and \Latremoliere{} in \cite{Aguilar-Latremoliere15}. The commutative algebra they focused on was the algebra of complex-valued continuous functions on the Cantor space (where the Cantor space is viewed as sequences of $0$'s and $1$'s), denoted $C(\mathcal{C})$. They accomplished these finite-dimensional approximations by placing a quantum metric on $C(\mathcal{C})$ using the group structure of the Cantor space since the Cantor space is a compact group \cite[Theorem 3.5 and Section 7]{Aguilar-Latremoliere15}. However, the Cantor space is also a compact metric space \cite[Theorem 30.5]{Willard}, and therefore has a classical quantum metric on it induced by the Lipschitz constant associated to the metric on the Cantor space. Now, each of these quantum metrics is induced by a seminorm, called a Lip-norm (Lip is short for Lipschitz), on $C(\mathcal{C})$. So, the natural question is whether these Lip-norms are the same. It is not too difficult to place distinct Lip-norms on a given commutative or noncommutative space, but it was also shown in \cite{Aguilar-Latremoliere15} that these two quantum metrics are strongly  related to each other in \cite[Corollary 7.6]{Aguilar-Latremoliere15} (we also state this relation in Theorem \ref{t:lac}). This relation is strong enough to provide an equivalence to when these Lip-norms are same on a dense subspace \cite[Theorem 8.1]{Rieffel99}.  Thus, it is not entirely trivial to establish a difference in these Lip-norms, which is a main accomplishment of this paper. Hence, our work suggests that it was important for \cite{Aguilar-Latremoliere15} to introduce this new quantum metric to achieve their finite-dimensional approximations.   We show that these Lip-norms are distinct by   introducing formulas for them on an infinite-dimensional  dense subalgebra of $C(\mathcal{C}).$ We also show that these Lip-norms agree on a Hamel basis for this dense subalgebra, which makes it even more surprising when we do find a family of elements where they disagree.

For the rest of this section, we provide sufficient background for the other two sections. In Section \ref{s:formulas}, we show that these Lip-norms agree on a Hamel basis for a dense subalgebra (Theorem \ref{t:basis-compare}) and also provide formulas for both Lip-norms built from the structure of this dense subalgebra (Theorem \ref{t:ldc2-formula} and Theorem \ref{t:lac-formula}). In Section \ref{s:separating}, using the results from the previous section, we provide more explicit formulas for these Lip-norms on a certain finite-dimensional subspace (Theorem \ref{t:ldc2-formula} and Theorem \ref{t:lac2-formula}) and use these formulas to separate these Lip-norms. Finally, we find a comparison of these Lip-norms on this finite-dimensional subspace. 

Now, we begin the background. We start with necessary definitions to define a compact quantum metric space. A compact quantum metric space is a certain kind of algebra called a C*-algebra with a special type of seminorm defined on it.  Thus, we define algebras now. Definitions (\ref{algebra-def}---\ref{states-def}) are contained in \cite[Chapter I]{Davidson}.
 \begin{definition}\label{algebra-def}
 An {\em associative algebra over the complex numbers} $\C$ is a vector space $A$ over $\C$ with an associative multiplication, denoted by concatenation, such that:
 \begin{align*}
 & a(b+c)=ab+ac \text{ and } (b+c)a=ba+ca \text{ for all } a,b,c \in A \\
 & \lambda (ab)=(\lambda a) b=a(\lambda b)  \text{ for all } a,b \in A, \lambda \in \C.
 \end{align*}
 In other words, the associative multiplication is a bilinear map from $A \times A$ to $A$. We denote the zero of a vector space by $0_A.$
 
 We say that $A$ is {\em unital} if there exists a multiplicative identity, denoted by $1_A$.  That is:
 \begin{equation*}
 1_A a=a=a1_A \text{ for all } a \in A.
 \end{equation*}
 \end{definition}
 \begin{convention}
All algebras are associative algebras over the complex number $\C$.
\end{convention}
\begin{notation}
When $E$ is a normed vector space, then its norm will be denoted by $\|\cdot\|_E$ by default.
\end{notation}
\begin{definition}\label{banach-algebra}
A {\em normed algebra} is an algebra $A$ with a norm $\Vert \cdot \Vert_A$ such that:
\begin{equation*}
\Vert ab \Vert_A \leq \Vert a \Vert_A \Vert b \Vert_A \text{ for all } a,b \in A.
\end{equation*}

$A$ is a  {\em Banach Algebra}  when $A$ is complete with respect to the norm $\Vert \cdot \Vert_A$.
\end{definition}
\begin{definition}\label{c*-algebra}
A {\em C*-algebra}, $A$, is a Banach algebra such that  there exists an anti-multiplicative conjugate linear involution ${}^* : A \longrightarrow A$, called the {\em adjoint}.  That is, * satisfies:
\begin{enumerate}
\item  (conjugate linear): $(\lambda (a+b))^* = \overline{\lambda}(a^* + b^*) \text{ for all } \lambda \in \C, a,b \in A;$  
\item  (involution):  $(a^*)^*=a \text{ for all } a \in A;$ 
\item  (anti-multiplicative):   $(ab)^* = b^*a^* \text{ for all } a,b \in A.$ 
\end{enumerate}
Furthermore, the norm, multiplication, and  adjoint together satisfy the identity:
 \begin{equation}\Vert a a^*\Vert_A= \Vert a \Vert_A^2 \text{ for all } a \in A
\end{equation}
 called the {\em C*-identity}.
 \begin{comment}
 The set of self-adjoint elements of a C*-algebra is the set $\sa{\A}=\{a \in \A : a=a^*\}$.

An element $u \in \A$ of a unital C*-algebra is {\em unitary} if $uu^*=u^*u=1_\A$.
\end{comment}

We say that $B \subseteq A$ is a {\em C*-subalgebra of} $A$ if $B$ is a norm closed subalgebra that is also self-adjoint, i.e. $a \in B \iff a^* \in B$.  
\end{definition}

Our main example will be the C*-algebra of complex-valued continuous functions on a compact metric space, which we define now.

\begin{example}[{\cite[Example I.1.2]{Davidson}}]\label{e:cont-fcn}
Let $(X, \mathsf{d}_X)$ be a compact metric space. Define
\[
C(X)=\{ f: X \rightarrow \C \mid f \text{ is continuous}\}
\]
This is a C*-algebra under pointwise algebraic operations including pointwise complex conjugation as the involution. That is, if $f \in C(X),$ then $f^*=\overline{f}$, which is defined for all $x \in X$, by 
\[
\overline{f}(x)=\overline{f(x)}.
\]
The C*-norm is given for all $f \in C(X)$ by \[\|f\|_{C(X)}=\|f\|_\infty=\sup_{x \in X} \{|f(x)|\}.\]
The unit is the constant $1$ function denoted $1_{C(X)}$ defind for all $x \in X$ by 
\[
1_{C(X)}(x)=1.
\]
\end{example}

In order to define  compact quantum metric spaces we need to define another structure associated to C*-algebras.
\begin{definition}\label{states-def}
Let $A$ be a unital C*-algebra. Let $A'$ denote the set of continuous and linear complex-valued functions on $A$. The {\em state space} of $A$ is the set
\begin{equation*} S(A)=\left\{ \varphi \in A' \mid  1=\varphi(1_A) = \Vert \varphi \Vert_{A'} \right\},
\end{equation*}
where $\Vert \varphi \Vert_{A'} = \sup \{\vert \varphi(a) \vert : a \in A, \Vert a \Vert_A =1\}$ is the operator norm. 
\end{definition}
We also need the notion of a seminorm, which we will allow to take value $\infty$ in this article.
\begin{definition}
Let $V$ be a vector space over $\C$.  A  {\em seminorm} $s$ on $A$ is a function
\[
s:A \rightarrow [0,\infty]
\]
such that
\begin{enumerate}
    \item $s(0_V)=0$,
    \item (homogeneity) $s(\lambda a)=|\lambda|s(a)$ for all $\lambda \in \C, a \in A,$
    \item (triangle inequality) $s(a+b)\leq s(a)+s(b)$ for all $a,b \in A.$
\end{enumerate}
\end{definition}
Now, we define  compact quantum metric spaces.

\begin{definition}[\cite{Rieffel98a,Rieffel99, Rieffel05,Latremoliere15}]\label{d:cqms}
A {\em compact quantum  metric space} $(A,\Lip)$ is an ordered pair where $A$ is a unital C*-algebra with unit $1_A$  and $\Lip$ is a seminorm on $A$ such that  $\dom{\Lip}=\{a \in A\mid \Lip(a)< \infty\}$ is dense in $A$, and:
\begin{enumerate}
\item $\Lip(a)=\Lip(a^*)$ for all $a \in A,$
\item $\{ a \in A \mid \Lip(a) = 0 \} = \C1_A$,
\item the seminorm $\Lip$ is lower semi-continuous on $A$ with respect to $\|\cdot\|_A$, and
\item the \emph{\mongekant} defined, for all two states $\varphi, \psi \in S(A)$, by
\begin{equation*}
\Kantorovich{\Lip} (\varphi, \psi) = \sup\left\{ |\varphi(a) - \psi(a)| \mid a\in A, \Lip(a) \leq 1 \right\}
\end{equation*}
is a metric on $S(A)$ such that there exists $D> 0$ such that
\begin{enumerate}
    \item $\mathrm{diam}(S(A), \Kantorovich{\Lip})\leq D$, and
    \item the set $\{a \in A \mid \Lip(a) \leq 1, \|a\|_A \leq D\}$ is compact in $A$.
\end{enumerate} 
\end{enumerate}
If $(A,\Lip)$ is a  compact quantum metric space, then we call the seminorm $\Lip$ a {\em Lip-norm}.  

Furthermore, if there exists $C\geq 1$ such that 
\[
\Lip(ab) \leq C (\Lip(a) \|b\|_A+ \Lip(b)\|a\|_A)
\]
 
for all $a,b \in A$, then we call $\Lip$, {\em $C$-quasi-Leibniz} and we call $(A, \Lip)$ a { \em $C$-quasi-Leibniz compact quantum metric space.} If $C=1$, then We call $\Lip$, {\em Leibniz}.
\end{definition}

One of the main examples of a quantum metric is the following and is due to Kantorovich, although Kantorovich did not call such an object a quantum metric. First, some definitions, the first of which allows us to show that this quantum metric of Kantorovich recaptures the classical metric.
\begin{definition}\label{t:Dirac-pm}
Let $(X, \mathsf{d}_X)$ is a compact metric space. Let $x \in X.$  We define the {\em Dirac point mass at $x$} to be the function
\[
\delta_x : f \in C(X) \mapsto f(x) \in \C,
\]
where $\delta_x \in S(C(X))$ \cite[Theorem VII.8.7]{Conway90}.
\end{definition}
We note that the set of all Dirac point masses on $C(X)$ is the set of certain kinds of states called {\em pure states} (see \cite[Theorem VII.8.7]{Conway90} and \cite[Theorem 5.1.6]{Murphy90}),  but we do not need to study this fact deeper in this article. Now, we define a main Lip-norm for this paper, which shows that quantum metrics can recover classical metrics.
\begin{definition}\label{d:Lipschitz-constant}
Let $(X, \mathsf{d}_X)$ be a compact metric space. The {\em  Lipschitz seminorm} on $C(X)$ is defined for all $f\in C(X)$ by
\[
\Lip_{\mathsf{d}_X}(f)=\sup_{x,y \in X, x \neq y} \left\{\frac{|f(x)-f(y)|}{\mathsf{d}_X(x,y)}\right\}.
\]
\end{definition}

\begin{theorem}[{\cite{Kantorovich40,Rieffel98a}}]\label{t:classical-qm}
If $(X, \mathsf{d}_X)$ be a compact metric space, then $(C(X), \Lip_{\mathsf{d}_X})$ is a Leibniz compact quantum metric space.

Moreover, for all $x,y \in X,$ it holds that
\[
\mathsf{d}_X(x,y)=\Kantorovich{\Lip_{\mathsf{d}_X}}(\delta_x, \delta_y).
\]
\end{theorem}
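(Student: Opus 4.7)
The plan is to verify each clause of Definition \ref{d:cqms} directly from the definition of $\Lip_{\mathsf{d}_X}$, then verify the Leibniz inequality, and finally establish the Monge-Kantorovich recovery formula by exhibiting an explicit extremizer. I would organize the argument as follows.

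First I would handle the easy algebraic conditions. The identity $\Lip_{\mathsf{d}_X}(f)=\Lip_{\mathsf{d}_X}(\overline{f})$ is immediate because $|\overline{f(x)}-\overline{f(y)}|=|f(x)-f(y)|$. The kernel condition follows because $\Lip_{\mathsf{d}_X}(f)=0$ forces $f(x)=f(y)$ for every pair $x,y\in X$ with $\mathsf{d}_X(x,y)>0$, i.e.\ for every distinct pair, so $f$ is constant, i.e.\ a scalar multiple of $1_{C(X)}$. Lower semi-continuity with respect to $\|\cdot\|_{C(X)}=\|\cdot\|_\infty$ follows from a pointwise argument: if $f_n\to f$ uniformly and $\Lip_{\mathsf{d}_X}(f_n)\leq M$ for all $n$, then for each fixed $x\neq y$ the quotient $|f_n(x)-f_n(y)|/\mathsf{d}_X(x,y)$ converges to $|f(x)-f(y)|/\mathsf{d}_X(x,y)$, which is therefore bounded by $M$; taking the supremum over $x\neq y$ gives $\Lip_{\mathsf{d}_X}(f)\leq M$. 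The Leibniz inequality is a direct computation using
\[
f(x)g(x)-f(y)g(y)=(f(x)-f(y))g(x)+f(y)(g(x)-g(y)),
\]
together with $|f(y)|,|g(x)|\leq \|f\|_\infty,\|g\|_\infty$; dividing by $\mathsf{d}_X(x,y)$ and taking suprema yields $\Lip_{\mathsf{d}_X}(fg)\leq \Lip_{\mathsf{d}_X}(f)\|g\|_\infty+\Lip_{\mathsf{d}_X}(g)\|f\|_\infty$ with constant $C=1$.

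Next I would address density of $\dom{\Lip_{\mathsf{d}_X}}$. For any $x_0\in X$ the function $z\mapsto \mathsf{d}_X(z,x_0)$ is $1$-Lipschitz, so the set of Lipschitz functions contains a point-separating unital self-adjoint subalgebra of $C(X)$ (via products and sums), and the Stone--Weierstrass theorem gives uniform density; alternatively, one can cite the direct fact that Lipschitz functions are dense in $C(X)$ for compact $(X,\mathsf{d}_X)$.

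Then I would verify the Monge-Kantorovich recovery formula, which simultaneously implies that $\Kantorovich{\Lip_{\mathsf{d}_X}}$ is a genuine metric on $S(C(X))$ and that $\mathrm{diam}(S(C(X)),\Kantorovich{\Lip_{\mathsf{d}_X}})\leq \mathrm{diam}(X,\mathsf{d}_X)$ (which is finite by compactness, giving the bound $D$ in clause (4)(a) of Definition \ref{d:cqms}). For fixed $x,y\in X$, any $f\in C(X)$ with $\Lip_{\mathsf{d}_X}(f)\leq 1$ satisfies $|\delta_x(f)-\delta_y(f)|=|f(x)-f(y)|\leq \mathsf{d}_X(x,y)$, giving the inequality $\Kantorovich{\Lip_{\mathsf{d}_X}}(\delta_x,\delta_y)\leq \mathsf{d}_X(x,y)$. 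For the reverse inequality, I would use the explicit test function $g_y(z)=\mathsf{d}_X(z,y)$, which is $1$-Lipschitz by the triangle inequality, and note that $|\delta_x(g_y)-\delta_y(g_y)|=\mathsf{d}_X(x,y)$. The same $g_y$ shows the general state-space bound: for arbitrary $\varphi,\psi\in S(C(X))$ and any $1$-Lipschitz $f$, one has $\|f-f(x_0)1_{C(X)}\|_\infty\leq \mathrm{diam}(X,\mathsf{d}_X)$ for any chosen $x_0$, and by invariance of $\Kantorovich{\Lip_{\mathsf{d}_X}}$ under adding scalar multiples of $1_{C(X)}$ to the test function, this gives the required diameter bound.

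The main obstacle is clause (4)(b), compactness of $\{f\in C(X)\mid \Lip_{\mathsf{d}_X}(f)\leq 1,\ \|f\|_\infty\leq D\}$ in $C(X)$. This I would settle by the Arzel\`a--Ascoli theorem: the set is uniformly bounded by $D$, and $\Lip_{\mathsf{d}_X}(f)\leq 1$ provides uniform equicontinuity because $|f(x)-f(y)|\leq \mathsf{d}_X(x,y)$ for every $f$ in the set; since $X$ is a compact metric space, Arzel\`a--Ascoli yields that the set is relatively compact, and lower semi-continuity of $\Lip_{\mathsf{d}_X}$ together with continuity of $\|\cdot\|_\infty$ shows it is closed, hence compact. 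Combining all these observations gives that $(C(X),\Lip_{\mathsf{d}_X})$ satisfies every item of Definition \ref{d:cqms} and the Leibniz inequality with $C=1$, and the identity $\mathsf{d}_X(x,y)=\Kantorovich{\Lip_{\mathsf{d}_X}}(\delta_x,\delta_y)$ is already established along the way.
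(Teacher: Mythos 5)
The paper does not prove this statement at all: Theorem \ref{t:classical-qm} is quoted as a classical result of Kantorovich and Rieffel, so there is no in-paper argument to compare yours against. Your outline is the standard proof and is essentially correct: the algebraic clauses, the Leibniz estimate, density of Lipschitz functions via Stone--Weierstrass, the recovery formula via the $1$-Lipschitz extremizer $g_y(z)=\mathsf{d}_X(z,y)$, and compactness of $\{f\mid \Lip_{\mathsf{d}_X}(f)\leq 1,\ \|f\|_\infty\leq D\}$ via Arzel\`a--Ascoli plus lower semi-continuity are all the right ingredients. One small overstatement: the recovery formula on Dirac masses does not by itself show that $\Kantorovich{\Lip_{\mathsf{d}_X}}$ separates arbitrary states $\varphi\neq\psi$; that point instead follows from the density of $\dom{\Lip_{\mathsf{d}_X}}$ you already established (two states agreeing on all Lipschitz functions agree on $C(X)$), so the gap is cosmetic rather than substantive.
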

In this paper we will consider the particular compact metric space given by the Cantor space, which we define now.
\begin{convention}
The natural numbers $\N$ contain $0$ throughout this article.
\end{convention}
\begin{definition}[{\cite[Corollary  30.5]{Willard}}]\label{d:Cantor-space}
The {\em Cantor space} is the set of sequences of $0$'s and $1$'s denoted by 
\[
\mathcal{C}=\{(x_n)_{n \in \N} \in \N^\N \mid \forall n \in \N, x_n \in \{0,1\} \}.
\]
The Cantor space is a compact metric space when equipped with the metric defined for all $x=(x_n)_{n \in \N}, y=(y_n)_{n \in \N} \in \mathcal{C}$ by 
\[
\dc(x,y)=\begin{cases}
0 & \text{: if } x=y\\
2^{-\min \{m \in \N \mid x_m \neq y_m\}} & \text{: otherwise}.
\end{cases}
\]
\end{definition}

The main subset that we will compare the Lipschitz seminorm $\ldc$ on $C(\mathcal{C})$ and the Lip-norm from \cite{Aguilar-Latremoliere15} is a certain dense subalgebra of $C(\mathcal{C})$. Thus, we now introduce notation for this subalgebra and list many facts from  \cite{Aguilar-Latremoliere15} that are important for our work, and are needed for defining the Lip-norm on $C(\mathcal{C})$ from \cite{Aguilar-Latremoliere15}.
\begin{notation}[{\cite[Section 7]{Aguilar-Latremoliere15}}]\label{n:dense-subalg}
Let $n \in \N$. We denote the $n^{th}$ coordinate evaluation map on $\mathcal{C}$ by 
\begin{equation*}
    \eta_n: (z_m)_{m \in \N} \in \mathcal{C} \longmapsto z_n \in \C
\end{equation*}
Also, define 
\begin{equation*}
    u_n = 2\eta_n - 1_{C(\mathcal{C})}
\end{equation*}
where $1_{C(\mathcal{C})}$ is the constant $1$ function on $\mathcal{C}$.

We note that $\eta_n, u_n \in C(\mathcal{C})$ $\eta_n^2=\eta_n$ and that the complex conjugate functions  $\overline{\eta_n}=\eta_n$ and $\overline{u_n}=u_n$, and $u_n^2=1_{C(\mathcal{C})}$. Furthermore, $\|\eta_n\|_\infty=\|u_n\|_\infty=1$, and $\eta_n(\mathcal{C}) =\{0,1\}$ and  $u_n(\mathcal{C}) =\{-1,1\}$.

Next, set $B_0=\emptyset$, and for each $n \in \N\setminus\{0\}$ set \[B_n=\left\{\prod_{j \in F} u_j \mid \emptyset \neq F\subseteq \{0, \ldots,n-1\}\right\}.\] Next, for each $n \in \N$, set $B_n'=\{1_{C(\mathcal{C})}\}\cup B_n$ and note that $|B_n'|=2^n$ since $C(\mathcal{C})$ is commutative and by the relations satisfied by the $u_n$'s  where $|B_n'|$ is the cardinality of $B_n'$ (see \cite[Notation 2.1.77]{Aguilar-Thesis} and \cite[Lemma 7.4]{Aguilar-Latremoliere15}).  Now, set 
\[A_n=\mathrm{span}\ B_n',\] which is a unital C*-subalgebra of $C(\mathcal{C})$ by  \cite[Lemma 7.4]{Aguilar-Latremoliere15}, where in \cite[Section 7]{Aguilar-Latremoliere15} the notation $\A_n$ was used   instead of $A_n$.  Also, note $\cup_{n \in \N} A_n$ is an  dense unital infinite-dimensional *-subalgebra of $C(\mathcal{C})$ and the set
\[
\left\{\prod_{j \in F} u_j \mid \emptyset \neq F \subset \N, F\text{ is finite} \right\}
\]
is a Hamel basis of $\cup_{n \in \N} A_n$ by \cite[Lemma 7.4]{Aguilar-Latremoliere15} and its proof.
\end{notation}
Now, that we have introduced the appropriate algebraic properties of $C(\mathcal{C})$ for our work, we introduce the analytical properties needed to build the Lip-norm from \cite{Aguilar-Latremoliere15}. In particular, we use a   state $\lambda \in S(C(\mathcal{C}))$ to build this Lip-norm. However, we note that $\lambda$ is built from the algebraic structure of $\mathcal{C}$ viewed as a compact group since $\lambda$ is induced by the Haar measure on this compact group (see \cite[Lemma 3.1.14]{Aguilar-Thesis} and \cite[Section 7]{Aguilar-Latremoliere15}).

\begin{lemma}[{\cite[Section 7]{Aguilar-Latremoliere15}}]\label{l:en}
The state $\lambda \in S(C(\mathcal{C}))$
of \cite[Notation 7.3]{Aguilar-Latremoliere15} satisfies \begin{enumerate}
            \item $\lambda\left(\prod_{j\in F}\eta_j\right)= 2^{-|F|}$ where $|F|$ represents the cardinality
            \item $\lambda\left(\prod_{j \in F}u_j\right)=0$
\end{enumerate}
for all $\emptyset \neq F \subset \N$ finite by \cite[Lemma 7.4]{Aguilar-Latremoliere15} and its proof. 

Furthermore, for each $n \in \N$, the continuous linear function
\[
E_n: C(\mathcal{C}) \rightarrow A_n
\]
of \cite[Theorem 3.5]{Aguilar-Latremoliere15} associated to $\lambda$ satisfies
\begin{enumerate}
    \item $E_n(f) = \sum_{a\in B_n^{'}} \lambda(fa)a$ for all $f \in C(\mathcal{C})$ by \cite[Expression (4.1) and Lemma 7.4]{Aguilar-Latremoliere15},
    \item $\|E_n(f)\|_{\infty} \leq \|f\|_{\infty}$  for all  $f \in C(\mathcal{C})$ by \cite[Definition 3.1]{Aguilar-Latremoliere15}
    \item $E_n(C(\mathcal{C}))=A_n$ by \cite[Theorem 3.5]{Aguilar-Latremoliere15}
    \item $E_n(a) = a,$ for all $a \in A_l$ such that $l \in \{0,...,n\}$
    \item $E_n(1_{C(\mathcal{C})}) = 1_{C(\mathcal{C})}$ by (4)
    \item $E_n(abc) = aE_n(b)c$   for all $a,c \in A_l, \text{where l} \in \{0,...,n\}$ by \cite[Definition 3.1]{Aguilar-Latremoliere15}
    \item If $k \in \{0,...,n\}$, then
        \begin{enumerate}
            \item If $n=0$, then    
                \begin{equation*}
                    E_0(u_k) = 0, \forall k \in \N
                \end{equation*} by proof of \cite[Theorem 7.5]{Aguilar-Latremoliere15}
            \item If $n \geq 1$, then 
                \begin{enumerate}
                    \item If $k \in \{0,...,n-1\}$
                        \begin{equation*}
                            E_n(u_k) = u_k
                        \end{equation*} by (4)
                    \item If $k \in \{n, n+1,...\}$
                        \begin{equation*}
                            E_n(u_k) = 0
                        \end{equation*}
                        by proof of \cite[Theorem 7.5]{Aguilar-Latremoliere15}.
                \end{enumerate}
            \end{enumerate}
\end{enumerate}
\end{lemma}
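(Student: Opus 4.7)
The plan is to dispatch each item by citing the appropriate result from \cite{Aguilar-Latremoliere15} where the statement already points to one, and to carry out by hand only the small calculations that tie the pieces together. Essentially, the lemma is a bookkeeping device whose real content is the formula for $E_n(u_k)$ when $k \geq n$; everything else follows from definitions or direct citations.

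First I would verify the two values of $\lambda$. Recalling from \cite[Notation 7.3]{Aguilar-Latremoliere15} that $\lambda$ is the state on $C(\mathcal{C})$ induced by the Haar probability measure on $\mathcal{C}$ viewed as the compact group $(\Z/2\Z)^\N$, each coordinate evaluation $\eta_j$ is the indicator of the clopen set $\{x \in \mathcal{C} : x_j = 1\}$, which has Haar measure $\frac{1}{2}$, and distinct coordinate projections are independent. Hence
\[
\lambda\!\left(\prod_{j\in F}\eta_j\right) \;=\; \prod_{j\in F}\lambda(\eta_j) \;=\; 2^{-|F|},
\]
and since $\lambda(u_j) = 2\lambda(\eta_j) - 1 = 0$, independence gives $\lambda(\prod_{j \in F} u_j) = 0$. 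Both computations are recorded in \cite[Lemma 7.4]{Aguilar-Latremoliere15} and its proof.

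Next I would handle the properties of $E_n$. Items (1)--(3), (5), and (6) are direct references: (1) is \cite[Expression (4.1) and Lemma 7.4]{Aguilar-Latremoliere15}; (2) is the contractivity built into \cite[Definition 3.1]{Aguilar-Latremoliere15}; (3) is \cite[Theorem 3.5]{Aguilar-Latremoliere15}; (5) is the instance $a = 1_{C(\mathcal{C})}$ of (4); and (6) is the $A_n$-bimodule property from \cite[Definition 3.1]{Aguilar-Latremoliere15}. For (4), I would note that $B_l' \subseteq B_n'$ when $l \leq n$, hence $A_l \subseteq A_n$, so that $E_n$ fixes $A_l$ pointwise since it is a projection onto $A_n$ (alternately, use formula (1) directly).

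Finally I would prove (7), which is where the only genuine calculation sits. For $n = 0$, the space $A_0 = \C 1_{C(\mathcal{C})}$, so formula (1) reads $E_0(u_k) = \lambda(u_k) \cdot 1_{C(\mathcal{C})} = 0$ by the second part of the lemma. For $n \geq 1$ and $k \leq n-1$, we have $u_k \in A_{k+1} \subseteq A_n$, so $E_n(u_k) = u_k$ by (4). The remaining case is $n \geq 1$ and $k \geq n$. Using (1), every $a \in B_n'$ is either $1_{C(\mathcal{C})}$ or of the form $\prod_{j \in F} u_j$ with $\emptyset \neq F \subseteq \{0, \ldots, n-1\}$. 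Since $k \geq n$, we have $k \notin F$, so the relation $u_j^2 = 1_{C(\mathcal{C})}$ yields $u_k \cdot a = \prod_{j \in F \cup \{k\}} u_j$ with $F \cup \{k\}$ nonempty; and $u_k \cdot 1_{C(\mathcal{C})} = u_k$ is likewise a product of $u_j$'s over a nonempty finite set. In every case $\lambda(u_k a) = 0$ by the second part of the lemma, so every term of the sum vanishes and $E_n(u_k) = 0$.

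The main obstacle is not depth but care: one must verify that the multi-indexing conventions of \cite{Aguilar-Latremoliere15} and the bookkeeping in $B_n'$ line up with how (1) is being applied, and in particular that $k \geq n$ genuinely forces $k \notin F$ for every contributing $F$. Once that is observed, the displayed sum collapses immediately and the lemma is complete.
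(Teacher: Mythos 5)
Your proposal is correct and takes essentially the same route as the paper, which states this lemma purely as a collection of citations to \cite{Aguilar-Latremoliere15} without supplying its own proof. The small computations you fill in (the Haar-measure values of $\lambda$ on products of $\eta_j$'s and $u_j$'s, and the collapse of $E_n(u_k)=\sum_{a\in B_n'}\lambda(u_k a)a$ to $0$ when $k\geq n$) are exactly what the cited results assert and coincide with the symmetric-difference argument the paper itself carries out in Lemma \ref{l:en-vanish}.
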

Finally, we define the Lip-norm $\lac$ on $C(\mathcal{C})$ from  \cite{Aguilar-Latremoliere15} that we will compare with $\ldc$.

\begin{theorem}[{\cite[Theorem 3.5 and Corollary 7.6]{Aguilar-Latremoliere15}}]\label{t:lac}
If   we define
\[
\lac(f)=\sup_{n \in \N}\left\{\frac{\|f-E_n(f)\|_\infty}{2^{-(n+1)}}\right\}
\]
for all $f \in C(\mathcal{C})$, then $(C(\mathcal{C}), \lac)$ is a $2$-Leibniz compact quantum metric space.  

Moreover,
\begin{enumerate}
    \item $\cup_{n \in \N} A_n \subseteq \dom{\lac}$,
    \item for all $n \in \N\setminus \{0\}$, it holds that 
    \[
    \lac(f)=\max_{k \in \{0, \ldots,n-1\}}\left\{\frac{\|f-E_k(f)\|_\infty}{2^{-(k+1)}} \right\}
    \]
    for all $f \in A_n$, 
    and 
    \item for all $x,y \in \mathcal{C},$ it holds that
    \[
    \Kantorovich{\lac}(\delta_x, \delta_y)= \dc(x,y)=\Kantorovich{\ldc}(\delta_x, \delta_y).
    \]
\end{enumerate}
\end{theorem}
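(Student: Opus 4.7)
The theorem breaks into three strands that I would handle separately: the compact-quantum-metric-space structure (with the $2$-quasi-Leibniz bound), the finite-dimensional formula on $A_n$, and the Monge-Kantorovich identification. For the first strand I would verify the axioms of Definition \ref{d:cqms} directly. Symmetry $\lac(f)=\lac(f^*)$ is immediate from Lemma \ref{l:en}(1) together with the fact that each basis element $a\in B_n'$ is self-adjoint. The kernel condition reduces to $A_0=\C 1_{C(\mathcal{C})}$: if $\lac(f)=0$, then $f=E_0(f)\in A_0$. Lower semi-continuity is automatic because $\lac$ is a supremum of $\|\cdot\|_\infty$-continuous seminorms. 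For the $2$-Leibniz bound I would use the decomposition
\[fg-E_n(fg)=(f-E_n(f))g+E_n(f)(g-E_n(g))-E_n\bigl((f-E_n(f))g\bigr),\]
which relies on the bimodule identity $E_n(E_n(f)g)=E_n(f)E_n(g)$ obtained from Lemma \ref{l:en}(6) applied to $a=E_n(f)\in A_n$. Dividing by $2^{-(n+1)}$ and taking a supremum over $n$ yields $\lac(fg)\leq 2(\lac(f)\|g\|_\infty+\|f\|_\infty\lac(g))$.

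For the moreover clauses (1) and (2), the key observation is that $B_n'\subseteq B_{n+1}'$ so $A_n\subseteq A_{n+1}$, and Lemma \ref{l:en}(4) then forces $E_k(f)=f$ for every $f\in A_n$ and every $k\geq n$. Thus for $f\in A_n$ only the finitely many terms with $k\in\{0,\ldots,n-1\}$ in the defining supremum are nonzero, which simultaneously gives $A_n\subseteq\dom{\lac}$ and the explicit maximum formula.

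For clause (3), the equality $\Kantorovich{\ldc}(\delta_x,\delta_y)=\dc(x,y)$ is Theorem \ref{t:classical-qm}, so I focus on $\Kantorovich{\lac}(\delta_x,\delta_y)=\dc(x,y)$. Fix $x\neq y$ and let $m=\min\{k:x_k\neq y_k\}$, so $\dc(x,y)=2^{-m}$. For the upper bound, every generator $\prod_{j\in F}u_j$ of $A_m$ (with $F\subseteq\{0,\ldots,m-1\}$) satisfies $\prod_{j\in F}u_j(x)=\prod_{j\in F}u_j(y)$ because the factors agree at $x$ and $y$; hence $E_m(f)(x)=E_m(f)(y)$, and the triangle inequality gives $|f(x)-f(y)|\leq 2\|f-E_m(f)\|_\infty\leq 2^{-m}\lac(f)$. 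For the matching lower bound, take $f=2^{-(m+1)}u_m$: since $u_m\in A_{m+1}$ and Lemma \ref{l:en}(7) gives $E_k(u_m)=0$ for every $k\in\{0,\ldots,m\}$, the formula from clause (2) yields $\lac(u_m)=\max_{k\in\{0,\ldots,m\}}2^{k+1}=2^{m+1}$, so $\lac(f)=1$ and $|f(x)-f(y)|=2^{-(m+1)}\cdot 2=2^{-m}$.

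The main obstacle is axiom (4)(b) of Definition \ref{d:cqms}, compactness of $\{a\in C(\mathcal{C}):\lac(a)\leq 1,\,\|a\|_\infty\leq D\}$. The strategy is to exploit the uniform rate $\|a-E_n(a)\|_\infty\leq 2^{-(n+1)}\lac(a)\leq 2^{-(n+1)}$: this reduces compactness of the candidate set to the finite-dimensional compactness of $\{E_n(a):\lac(a)\leq 1,\,\|a\|_\infty\leq D\}\subseteq A_n$ (which is a norm-bounded subset of the finite-dimensional space $A_n$), together with a diagonal-limit argument to pass to the uniform limit. This is where the general machinery developed in \cite{Aguilar-Latremoliere15} does the real work, and I would invoke it rather than redeveloping the argument here.
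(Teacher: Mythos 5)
The paper offers no proof of this theorem at all: it is imported verbatim from \cite[Theorem 3.5 and Corollary 7.6]{Aguilar-Latremoliere15}, so there is nothing internal to compare against. What you have written is therefore a genuinely different route, namely an essentially self-contained verification, and it is correct in all the parts you actually carry out. Your telescoping identity $fg-E_n(fg)=(f-E_n(f))g+E_n(f)(g-E_n(g))-E_n\bigl((f-E_n(f))g\bigr)$ checks out once one uses Lemma \ref{l:en}(6) with $a=E_n(f)\in A_n$ and $c=1_{C(\mathcal{C})}$ to get $E_n(E_n(f)g)=E_n(f)E_n(g)$, and contractivity of $E_n$ (Lemma \ref{l:en}(2)) then gives the constant $2$; this is exactly the mechanism that makes conditional-expectation Lip-norms $2$-quasi-Leibniz rather than Leibniz. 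The reduction of the supremum to a finite maximum via $E_k(f)=f$ for $f\in A_n$, $k\geq n$ is the right argument for clauses (1) and (2), and your two-sided estimate for clause (3) (upper bound from $E_m(f)(x)=E_m(f)(y)$ when $x$ and $y$ agree below $m$, lower bound from the normalized witness $2^{-(m+1)}u_m$) is clean and complete. Two small caveats. First, you never address axiom (4)(a), the finiteness of $\mathrm{diam}(S(C(\mathcal{C})),\Kantorovich{\lac})$; it follows in one line from $E_0(a)=\lambda(a)1_{C(\mathcal{C})}$, which gives $|\varphi(a)-\psi(a)|\leq 2\|a-E_0(a)\|_\infty\leq\lac(a)$ for any two states, so the diameter is at most $1$. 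Second, the compactness axiom (4)(b) is only sketched and then delegated to the machinery of \cite{Aguilar-Latremoliere15}; that is a legitimate choice here, and indeed no worse than what the paper itself does, but note that to close the argument you also need the set $\{a:\lac(a)\leq 1,\ \|a\|_\infty\leq D\}$ to be closed, which your lower semi-continuity observation supplies.
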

It is the last expression that suggests that $\lac$ and $\ldc$ could agree on a dense subspace of $C(\mathcal{C}).$  Indeed, this expression serves as a main assumption of \cite[Theorem 8.1]{Rieffel99} that provides an equivalence for this agreement.  Thus, it is a main goal of this paper to show that $\lac$ and $\ldc$ disagree and we separate them on $\cup_{n \in \N} A_n$.

\section{Formulas for \texorpdfstring{$\ldc$ and $\lac$}{Lg}}\label{s:formulas}
Now, we will provide the main tools we use to separate $\lac$ and $\ldc$ on $\cup_{n \in \N}A_n $.  We do this by providing formulas for $\lac$ and $\ldc$ on each $A_n$.  Also, we show that $\lac$ and $\ldc$ agree on a Hamel basis for $\cup_{n \in \N} A_n$, which provides further evidence that $\lac$ and $\ldc$ could agree, but they do not as seen in Section \ref{s:separating}.
Our first task is to show that $\ldc$ and $\lac$ are even comparable. We already know that $\cup_{n \in \N} A_n \subseteq \dom{\lac}$ by Theorem \ref{t:lac}, so we will now show that $\cup_{n \in \N} A_n \subseteq \dom{\ldc}$.    

\begin{theorem}\label{t:domain}
It holds that $\cup_{n \in \N} A_n \subseteq \dom{\ldc}$. In particular, for all $n \in \N$, we have
\[
\ldc(u_n)=2^{n+1}< \infty.
\]
\end{theorem}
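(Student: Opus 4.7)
The plan is to compute $\ldc(u_n)$ directly from Definition \ref{d:Lipschitz-constant} and then leverage the Leibniz property of $\ldc$ together with the triangle inequality to deduce that every element of $\cup_{n \in \N} A_n$ lies in $\dom{\ldc}$.

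For the explicit computation of $\ldc(u_n)$, I would start by observing that for any $x = (x_m)_{m\in \N}, y=(y_m)_{m \in \N} \in \mathcal{C}$, the difference $u_n(x) - u_n(y) = 2(x_n - y_n)$ takes the value $0$ when $x_n = y_n$ and has absolute value $2$ when $x_n \neq y_n$. Therefore the supremum defining $\ldc(u_n)$ is effectively taken only over pairs with $x_n \neq y_n$. For any such pair, $\min\{m \in \N \mid x_m \neq y_m\} \leq n$, so $\dc(x,y) \geq 2^{-n}$, giving the upper bound
\[
\frac{|u_n(x) - u_n(y)|}{\dc(x,y)} \leq \frac{2}{2^{-n}} = 2^{n+1}.
\]
The reverse inequality is achieved by exhibiting a concrete pair: take $x$ to be the zero sequence and $y$ to be the sequence that equals $0$ everywhere except at position $n$, where it equals $1$. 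Then $x_m = y_m$ for $m < n$ and $x_n \neq y_n$, so $\dc(x,y) = 2^{-n}$ and $|u_n(x) - u_n(y)| = 2$, yielding the ratio $2^{n+1}$. Hence $\ldc(u_n) = 2^{n+1}$, which is finite for each $n \in \N$.

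For the general containment, I would recall from Theorem \ref{t:classical-qm} that $(C(\mathcal{C}), \ldc)$ is a Leibniz compact quantum metric space, so $\ldc$ satisfies
\[
\ldc(fg) \leq \ldc(f) \|g\|_\infty + \ldc(g) \|f\|_\infty
\]
for all $f, g \in C(\mathcal{C})$. Together with the fact that $\|u_j\|_\infty = 1$ for all $j$ and $\ldc(u_j) = 2^{j+1} < \infty$ from the previous step, an immediate induction on $|F|$ shows that every finite product $\prod_{j \in F} u_j$ lies in $\dom{\ldc}$. Since $\ldc$ is a seminorm, the triangle inequality then ensures that every finite linear combination of such products (including the unit $1_{C(\mathcal{C})}$, for which $\ldc(1_{C(\mathcal{C})}) = 0$) is also in $\dom{\ldc}$, and this is exactly the span $A_n = \mathrm{span}\,B_n'$. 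Taking the union over $n \in \N$ completes the proof.

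There is no serious obstacle here; the argument is a routine direct verification. The only care needed is in the choice of the explicit pair $(x,y)$ that realizes the Lipschitz ratio of $u_n$, where one must ensure that the first position of disagreement is precisely $n$ rather than some earlier index, which is why we take $x$ and $y$ to agree on all coordinates with index strictly less than $n$.
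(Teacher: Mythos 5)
Your proposal is correct and follows essentially the same strategy as the paper: a direct computation of the Lipschitz ratio for $u_n$ (upper bound via $\dc(x,y)\geq 2^{-n}$ whenever $x_n\neq y_n$, attained by an explicit pair disagreeing first at coordinate $n$), followed by the Leibniz inequality, induction, and the seminorm triangle inequality to cover all of $\cup_{n\in\N}A_n$. The only cosmetic difference is that you collapse the paper's three-way case analysis on the first index of disagreement into a single observation, which is a slightly cleaner presentation of the same argument.
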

\begin{proof} Let $n \in \N.$
            Fix $x,y \in \mathcal{C}$ such that $x \neq y$. Then, 
            \begin{align*}
                |u_n(x)-u_n(y)|& = |2\eta_n(x)-1-2\eta_n(y)+1|\\
                &=2|\eta_n(x)-\eta_n(y)|.
            \end{align*}
        Since $x \neq y$, we know that there exists a smallest $k \in \N$ such that $x_k \neq y_k$. Therefore, 
            \begin{equation*}
              \dd_\mathcal{C}(x,y) = 2^{-k}
            \end{equation*}
            and $|x_k-y_k|=1$ since $x_k,y_k \in \{0,1\}$ and $x_k\neq y_k$.
        
        First, assume $k \in \{0,...,n\}$. If $k=n$,
            \begin{align*}
      \frac{|u_n(x)-u_n(y)|}{\dc(x,y)}& =  \frac{2|\eta_n(x)-\eta_n(y)|}{2^{-k}}=
                \frac{2|\eta_k(x)-\eta_k(y)|}{2^{-k}} \\
                &=\frac{2|x_k-y_k|}{2^{-k}} =\frac{2\cdot 1}{2^{-k}}=\frac{2}{2^{-n}} = 2\cdot2^n
            \end{align*}
        If $k>n$, then $x_n=y_n$ since $k$ is the first coordinate where $x$ and $y$ disagree. Hence,  $u_n(x)=u_n(y)$ by definition of $u_n$,  and thus 
        \begin{equation*}
             \frac{|u_n(x)-u_n(y)|}{\dc(x,y)}=\frac{|u_n(x)-u_n(y)|}{2^{-k}} =\frac{0}{2^{-k}}= 0. 
        \end{equation*} 
        Thus, if $n=0$, then we would be done. Next, assume $n>0$. The remaining case is: $0\leq k<n$.  Thus, $2^k < 2^n \implies \frac{1}{2^{-k}}< \frac{1}{2^{-n}}$ and therefore
        \begin{equation*}
            \frac{|u_n(x)-u_n(y)|}{\dc(x,y)}=\frac{2|\eta_n(x)-\eta_n(y)|}{2^{-k}} < \frac{2|\eta_n(x)-\eta_n(y)|}{2^{-n}}\leq \frac{2\cdot 1}{2^{-n}}
        \end{equation*}
        since $|\eta_n(x)-\eta_n(y)|=|x_n-y_n|\leq 1$ for any $n \in \N$ as $x_n,y_n \in \{0,1\}.$

        Hence, \[\frac{|u_n(x)-u_n(y)|}{d_c(x,y)}\leq \frac{2}{2^{-n}}\] for all $x,y \in \mathcal{C}, x \neq y$ and there exists $x,y \in \mathcal{C}$ such that $\frac{|u_n(x)-u_n(y)|}{d_c(x,y)}= \frac{2}{2^{-n}}$ since we may just choose $x,y \in \mathcal{C}$ such that the first coordinate they disagree at is $n$. Thus, 
        \begin{equation*}
            \ldc (u_n) =\sup_{x,y \in \mathcal{C}, x\neq y}\left\{ \frac{|u_n(x)-u_n(y)|}{\dc(x,y)}\right\} = 2\cdot2^n=2^{n+1} < \infty. 
        \end{equation*}
     Now, by the Leibniz rule and induction, we have that $\ldc(f)< \infty$, where $f$ is any finite product of $u_n$'s. Thus, since $\ldc$ is a seminorm and by induction, we have that $\ldc(f)<\infty$, where $f$ is any finite linear combination of finite products of $u_n$'s. Therefore $\ldc(f)< \infty$ for all $f \in \cup_{n \in \N} A_n,$ by definition of the $A_n$'s.
\end{proof}

Thus, we are now free to compare $\ldc$ and $\lac$ on $\cup_{n \in \N} A_n$. Now, by the proof of \cite[Theorem 7.5]{Aguilar-Latremoliere15}, we have that $\lac(u_n)=2^{n+1}=\ldc(u_n)$ for all $n \in \N$. It turns out that we can do much more and show that $\lac$ and $\ldc$ agree on all the elements of the Hamel basis of $\cup_{n\in \N} A_n$ given in Notation \ref{n:dense-subalg}.  The proof of Theorem \ref{t:basis-compare} follows a similar process as the proof of Theorem \ref{t:domain}, but requires some  different techniques that are crucial and acknowledge deeper structure. Thus, we first prove a lemma about the algebraic structure of the $E_n$'s which extends (7) of Lemma \ref{l:en} to finite products of $u_n$'s.

\begin{lemma}\label{l:en-vanish}
Let $k \in \N$. If $z \in \N\setminus \{0\}$ and $j_0, \ldots, j_z \in \N$ such that $j_0< \cdots <j_z,$ then 
\[
E_k(u_{j_0} \cdots u_{j_z})=\begin{cases}
u_{j_0} \cdots u_{j_z} & \text{: if } j_z <k\\
0_{C(\mathcal{C})} & \text{: if } j_z\geq k.  
\end{cases}
\]
\end{lemma}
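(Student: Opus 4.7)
The plan is to split the proof along the piecewise definition. The case $j_z < k$ is immediate: all indices $j_0, \ldots, j_z$ lie in $\{0, \ldots, k-1\}$, so the product $u_{j_0} \cdots u_{j_z}$ belongs to $\mathrm{span}\,B_k' = A_k$ by Notation \ref{n:dense-subalg}, and item (4) of Lemma \ref{l:en} states that $E_k$ fixes every element of $A_k$.

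For the case $j_z \geq k$, I first dispatch $k = 0$ separately: the formula in item (1) of Lemma \ref{l:en} reduces to $E_0(f) = \lambda(f)\,1_{C(\mathcal{C})}$ since $B_0' = \{1_{C(\mathcal{C})}\}$, and item (2) of Lemma \ref{l:en} tells us that $\lambda$ vanishes on every nonempty product of distinct $u_j$'s. For $k \geq 1$, let $l$ be the largest index with $j_l < k$, setting $l = -1$ if every $j_i$ satisfies $j_i \geq k$. The ``low'' factor $u_{j_0} \cdots u_{j_l}$ (interpreted as $1_{C(\mathcal{C})}$ when $l = -1$) is a product of generators whose indices all lie in $\{0, \ldots, k-1\}$, and hence belongs to $A_k$. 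So item (6) of Lemma \ref{l:en} lets me pull it out:
\[
E_k(u_{j_0} \cdots u_{j_z}) = u_{j_0} \cdots u_{j_l} \cdot E_k(u_{j_{l+1}} \cdots u_{j_z}).
\]
It then suffices to prove that $E_k(u_{j_{l+1}} \cdots u_{j_z}) = 0_{C(\mathcal{C})}$ whenever $k \leq j_{l+1} < \cdots < j_z$.

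To finish, I plan to apply the formula $E_k(f) = \sum_{a \in B_k'} \lambda(fa)\,a$ from item (1) of Lemma \ref{l:en}. Each $a \in B_k'$ is either $1_{C(\mathcal{C})}$ or of the form $\prod_{i \in F} u_i$ for some nonempty $F \subseteq \{0, \ldots, k-1\}$. Because the index set $\{j_{l+1}, \ldots, j_z\} \subseteq \{k, k+1, \ldots\}$ is disjoint from any such $F$, the element $u_{j_{l+1}} \cdots u_{j_z} \cdot a$ is again a product of distinct $u_i$'s over a nonempty finite subset of $\N$; item (2) of Lemma \ref{l:en} then forces $\lambda(u_{j_{l+1}} \cdots u_{j_z} \cdot a) = 0$. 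Summing over $a \in B_k'$ gives $E_k(u_{j_{l+1}} \cdots u_{j_z}) = 0_{C(\mathcal{C})}$, as desired.

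The main obstacle is purely organizational: cleanly handling the boundary cases $l = -1$ (empty low factor, which forces a direct appeal to the formula for $E_k$ rather than to the module property (6)) and $l = z - 1$ (high factor has the single term $u_{j_z}$, where one could instead invoke item (7)(b)(ii) of Lemma \ref{l:en}). Both fit uniformly into the scheme above, so no additional argument is required.
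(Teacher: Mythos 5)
Your proof is correct and follows essentially the same route as the paper's: both arguments reduce the case $j_z\geq k$ to expanding $E_k$ via the formula $E_k(f)=\sum_{a\in B_k'}\lambda(fa)\,a$ and invoking the vanishing of $\lambda$ on every nonempty product of distinct $u_j$'s. The only difference is organizational: you first strip off the factors with index below $k$ using the module property (6) of Lemma \ref{l:en}, so that the surviving product and each $a\in B_k'$ have disjoint index sets, whereas the paper keeps the full product and observes that the symmetric difference $\{j_0,\ldots,j_z\}\triangle\{n_0,\ldots,n_p\}$ is nonempty because it contains $j_z$.
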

\begin{proof}
First, if $j_z <k$, then $u_{j_0} \cdots u_{j_z}\in A_{k}$, and thus $E_k(u_{j_0} \cdots u_{j_z})=u_{j_0} \cdots u_{j_z}$ by (4) of Lemma \ref{l:en}.

Next, assume $j_z \geq k$.   Let $a\in B_k'$, then if $a=1_{C(\mathcal{C})}$, we have  $\lambda(u_{j_0} \cdots u_{j_z}a)=\lambda(u_{j_0} \cdots u_{j_z})=0$ by the first (2) of Lemma \ref{l:en}.  Next, by definition of $B_k'$, if $a=u_{n_0}\cdots u_{n_p}$, then $n_0, \ldots, n_p \in \N, n_0<\cdots< n_p <k\leq j_z$. Set $F=\{j_0, \ldots, j_z\} \triangle \{n_0, \ldots,n_p\},$ where $\triangle$ denotes symmetric difference.  By assumption, we have that $F \neq \emptyset$ since $j_z \in F$ as $j_z \neq u_{n_q}$ for all $ q \in \{0, \ldots, p\}$. Also, since $u_n^2=1_{C(\mathcal{C})}$ for all $n \in \N$ by Notation \ref{n:dense-subalg}, we have that \[u_{j_0} \cdots u_{j_z}a=\prod_{m \in F} u_m.\]  Therefore, as $F \neq \emptyset$, we have that   \[\lambda(u_{j_0} \cdots u_{j_z}a)=\lambda\left(\prod_{m \in F} u_m\right) =0\] by the first (2) of Lemma \ref{l:en}.  This exhausts all  elements in $B_k'$. Thus, we have
\[
E_k(u_{j_0} \cdots u_{j_z})=\sum_{a \in B_k'} \lambda(u_{j_0} \cdots u_{j_z}a)a=\sum_{a \in B_k'} 0\cdot a=0_{C(\mathcal{C})}
\]
by the second (1) of Lemma \ref{l:en}.
\end{proof}
\begin{theorem}\label{t:basis-compare}
For each $n \in \N$, it holds that
\[
\ldc(u_n)=2^{n+1}=\lac(u_n).
\]
And, for each $z \in \N\setminus \{0\}$, $j_0, \ldots, j_z \in \N$ such that $j_0<\cdots < j_z$, it holds that
\[
\ldc(u_{j_0}\cdots u_{j_z})=2^{j_z+1}=\lac(u_{j_0}\cdots u_{j_z}).
\]
\end{theorem}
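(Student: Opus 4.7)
The first claim $\ldc(u_n)=2^{n+1}=\lac(u_n)$ is essentially already on the table: the $\ldc$ side is exactly the content of Theorem \ref{t:domain}, and the $\lac$ side is recorded from the proof of \cite[Theorem 7.5]{Aguilar-Latremoliere15}. So the real work is the product case, and it splits cleanly into two independent computations, one for each seminorm.

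For $\ldc(u_{j_0}\cdots u_{j_z})$, the plan is to mimic the proof of Theorem \ref{t:domain}. Set $f=u_{j_0}\cdots u_{j_z}$, fix $x\neq y$ in $\mathcal{C}$, and let $k=\min\{m\in\N:x_m\neq y_m\}$ so that $\dc(x,y)=2^{-k}$. Since each $u_{j_i}$ takes values in $\{-1,1\}$, so does $f$, and hence $|f(x)-f(y)|\in\{0,2\}$. If $k>j_z$, then $x_{j_i}=y_{j_i}$ for every $i$, so $f(x)=f(y)$ and the difference quotient is $0$. If $k\leq j_z$, then $\dc(x,y)=2^{-k}\geq 2^{-j_z}$, so the quotient is bounded by $2/2^{-j_z}=2^{j_z+1}$. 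To see this bound is attained, I would choose $x,y\in\mathcal{C}$ that first disagree at coordinate $j_z$; then $u_{j_i}(x)=u_{j_i}(y)$ for $i<z$ (their common value is $\pm 1$), while $u_{j_z}(x)=-u_{j_z}(y)$, so the products differ by $\pm 2$ and the quotient equals $2^{j_z+1}$. Taking the supremum over $x\neq y$ gives $\ldc(f)=2^{j_z+1}$.

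For $\lac(u_{j_0}\cdots u_{j_z})$, the key input is Lemma \ref{l:en-vanish}, which tells us exactly what $E_k(f)$ looks like:
\[
f-E_k(f)=\begin{cases}0_{C(\mathcal{C})}&\text{if }k>j_z,\\ f&\text{if }k\leq j_z.\end{cases}
\]
Because $f$ takes values in $\{-1,1\}$ we have $\|f\|_\infty=1$, so $\|f-E_k(f)\|_\infty$ is $0$ for $k>j_z$ and $1$ for $k\leq j_z$. Feeding this into the formula for $\lac$ in Theorem \ref{t:lac} (applied with $n=j_z+1$, noting $f\in A_{j_z+1}$) yields
\[
\lac(f)=\max_{k\in\{0,\ldots,j_z\}}\frac{1}{2^{-(k+1)}}=\max_{k\in\{0,\ldots,j_z\}}2^{k+1}=2^{j_z+1},
\]
which matches the $\ldc$ value.

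I do not expect a genuine obstacle here: Lemma \ref{l:en-vanish} has already done the algebraic heavy lifting for the $\lac$ side, and the $\ldc$ side is a direct two-case analysis once one observes that the product of $u_{j_i}$'s is $\{\pm 1\}$-valued. The only point that deserves care is exhibiting a pair $x,y$ that saturates the $\ldc$ bound — specifically making sure the agreement of the lower factors $u_{j_0},\ldots,u_{j_{z-1}}$ forces the two products to be exact negatives of each other — and this is handled by choosing $x$ and $y$ to first disagree precisely at coordinate $j_z$.
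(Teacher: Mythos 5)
Your proposal is correct and follows essentially the same route as the paper: a case analysis on the first coordinate of disagreement for $\ldc$ (with attainment at a pair first disagreeing at coordinate $j_z$), and Lemma \ref{l:en-vanish} fed into the finite-maximum formula of Theorem \ref{t:lac} for $\lac$. One small remark in your favor: your application of Lemma \ref{l:en-vanish} — namely $E_k(u_{j_0}\cdots u_{j_z})=0_{C(\mathcal{C})}$ for \emph{all} $k\leq j_z$, so that every term equals $2^{k+1}$ and the maximum is $2^{j_z+1}$ — is the reading consistent with the lemma as stated, whereas the paper's proof asserts $E_k(u_{j_0}\cdots u_{j_z})=u_{j_0}\cdots u_{j_z}$ for $k<j_z$ and keeps only the $k=j_z$ term; both yield the same value.
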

\begin{proof}
The first equality is  provided by Theorem \ref{t:domain} and the proof of \cite[Theorem 7.5]{Aguilar-Latremoliere15}.

Next, let $z \in \N\setminus \{0\}$, $j_0, \ldots, j_z \in \N$ such that $j_0<\cdots < j_z$. 

We will first consider $\ldc(u_{j_0}\cdots u_{j_z}).$ Let $x,y \in \mathcal{C}$ such that $x \neq y.$ Thus, there smallest $k \in \N$ such that $x_k\neq y_k$, and thus $\dc(x,y)=2^{-k}.$

\begin{case}
Assume that $j_z <k$.
\end{case} Then $j_0, \ldots, j_z<k$.  Hence, $x_{j_0}=y_{j_0}, \ldots, x_{j_z}=y_{j_z}$ since $k$ is the first coordinate where $x$ and $y$ disagree. Thus,   $u_{j_0}(x)=u_{j_0}(y), \ldots, u_{j_z}(x)=u_{j_z}(y) $ by definition of  $u_{j_0}, \ldots, u_{j_z}.$ Therefore, 
\[\frac{|u_{j_0}\cdots u_{j_z}(x)-u_{j_0}\cdots u_{j_z}(y)|}{\dc(x,y)}=\frac{|u_{j_0}(x)\cdots u_{j_z}(x)-u_{j_0}(y)\cdots u_{j_z}(y)|}{\dc(x,y)}=0.\]

\begin{case}
Assume that $k=j_z$.
\end{case}    Hence, $x_{j_0}=y_{j_0}, \ldots, x_{j_{z-1}}=y_{j_{z-1}},  x_{j_z}\neq y_{j_z}$ since $k=j_z$ is the first coordinate where $x$ and $y$ disagree. Thus,   $u_{j_0}(x)=u_{j_0}(y), \ldots, u_{j_{z-1}}(x)=u_{j_{z-1}}(y) $ and $u_{j_z}(x)\neq u_{j_z}(y)$, and thus $u_{j_z}(x)=-u_{j_z}(y)$ by definition of   $u_{j_0}, \ldots, u_{j_z}$ and the fact that the range of these functions is $\{-1,1\}.$  Therefore, we have
\begin{align*}
   & \frac{|u_{j_0}\cdots u_{j_z}(x)-u_{j_0}\cdots u_{j_z}(y)|}{\dc(x,y)}\\
   & = \frac{|u_{j_0}(x)\cdots u_{j_z-1}(x) u_{j_z}(x)-u_{j_0}(y)\cdots u_{j_z-1}(y) u_{j_z}(y)|}{2^{-j_z}}\\
    & = \frac{|u_{j_0}(x)\cdots u_{j_z-1}(x) u_{j_z}(x)-u_{j_0}(x)\cdots u_{j_z-1}(x)(- u_{j_z}(x))|}{2^{-j_z}}\\
    & =  \frac{|u_{j_0}(x)\cdots u_{j_z-1}(x) u_{j_z}(x)+u_{j_0}(x)\cdots u_{j_z-1}(x)  u_{j_z}(x)|}{2^{-j_z}}\\
    &= \frac{2|u_{j_0}(x)\cdots u_{j_z-1}(x) u_{j_z}(x)|}{2^{-j_z}}  = \frac{2\cdot 1 }{2^{-j_z}}=2^{j_z+1}
\end{align*}
since $u_{j_0}(x)\cdots u_{j_z-1}(x) u_{j_z}(x) \in \{-1,1\}$. Such  $x,y\in \mathcal{C}$ exist and thus $2^{j_z+1}$ does exist in $\left\{ \frac{|u_{j_0}\cdots u_{j_z}(x)-u_{j_0}\cdots u_{j_z}(y)|}{\dc(x,y)} \mid x,y \in \mathcal{C}, x \neq y\right\}.$

\begin{case}
Assume that $k <j_z.$
\end{case}  Then, we have $2^k<2^{j_z},$ which implies $\frac{1}{2^{-k}}< \frac{1}{2^{-j_z}}.$ So,
\begin{align*}
   \frac{|u_{j_0}\cdots u_{j_z}(x)-u_{j_0}\cdots u_{j_z}(y)|}{\dc(x,y)}
   & = \frac{|u_{j_0}\cdots u_{j_z}(x)-u_{j_0}\cdots u_{j_z}(y)|}{2^{-k}}\\
   & < \frac{|u_{j_0}\cdots u_{j_z}(x)-u_{j_0}\cdots u_{j_z}(y)|}{2^{-j_z}}\\
   & \leq \frac{|u_{j_0}\cdots u_{j_z}(x)|+|u_{j_0}\cdots u_{j_z}(y)|}{2^{-j_z}}\\
   & = \frac{1+1}{2^{-j_z}}=2^{j_z+1}.
\end{align*}
Therefore, 
\[
\ldc(u_{j_0}\cdots u_{j_z})=\sup_{x,y \in \mathcal{C}, x\neq y}\left\{ \frac{|u_{j_0}\cdots u_{j_z}(x)-u_{j_0}\cdots u_{j_z}(y)|}{\dc(x,y)}\right\}=2^{j_z+1}.
\]

Next, we consider $\lac(u_{j_0}\cdots u_{j_z}).$ Now, $u_{j_0}\cdots u_{j_z} \in A_{j_z+1}$, and thus by Theorem \ref{t:lac}, we have that
\[
\lac(u_{j_0}\cdots u_{j_z})=\max_{k \in \{0, \ldots, j_z\}} \left\{\frac{\|u_{j_0}\cdots u_{j_z}-E_k( u_{j_0}\cdots u_{j_z})\|_\infty}{2^{-(k+1)}} \right\}.
\]
By Lemma \ref{l:en-vanish}, we have   $E_{j_z}( u_{j_0}\cdots u_{j_z})=0_{C(\mathcal{C})}$ and $E_k( u_{j_0}\cdots u_{j_z})= u_{j_0}\cdots u_{j_z}$ for all $k \in \{0, \ldots, j_z-1\}.$  Hence
\[
\lac(u_{j_0}\cdots u_{j_z})= \frac{\|u_{j_0}\cdots u_{j_z}-0_{C(\mathcal{C})}\|_\infty}{2^{-(j_z+1)}} =\frac{1}{2^{-(j_z+1)}}=2^{j_z+1}
\]
since $u_{j_0}\cdots u_{j_z}(\mathcal{C})=\{-1,1\}$ and the definition of the norm $\|\cdot\|_\infty$.
\end{proof}
From this, we can see that $\ldc$ and $\lac$ agree on $A_0$ and $A_1.$ 
\begin{corollary}\label{c:l-agree}
If $f \in A_1$, then $f=\alpha_01_{C(\mathcal{C})}+\alpha_1 u_0$ for some $\alpha_0, \alpha_1 \in \C$ and 
\[
\lac(f)=2|\alpha_1|=\ldc(f),
\]
and thus $\lac=\ldc$ on $A_1$ and $A_0$.
\end{corollary}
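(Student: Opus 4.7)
The plan is to reduce the entire statement to the value of $\lac(u_0)$ and $\ldc(u_0)$ already recorded in Theorem \ref{t:basis-compare}, exploiting the fact that $A_1$ has a basis with only one non-scalar element. First I would invoke Notation \ref{n:dense-subalg}: since $B_1' = \{1_{C(\mathcal{C})}, u_0\}$ and $A_1 = \mathrm{span}\, B_1'$, every $f \in A_1$ admits a unique expression $f = \alpha_0 1_{C(\mathcal{C})} + \alpha_1 u_0$ with $\alpha_0, \alpha_1 \in \C$. This already delivers the representation claimed in the corollary.

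Next, for either $\Lip \in \{\lac, \ldc\}$, the Lip-norm axiom (Definition \ref{d:cqms}(2)) gives $\Lip(\alpha_0 1_{C(\mathcal{C})}) = 0$. Combining homogeneity with the triangle inequality applied in both directions yields
\[
\Lip(f) \leq \Lip(\alpha_0 1_{C(\mathcal{C})}) + |\alpha_1|\Lip(u_0) = |\alpha_1|\Lip(u_0)
\]
and
\[
|\alpha_1|\Lip(u_0) = \Lip(f - \alpha_0 1_{C(\mathcal{C})}) \leq \Lip(f) + \Lip(\alpha_0 1_{C(\mathcal{C})}) = \Lip(f),
\]
so $\Lip(f) = |\alpha_1|\Lip(u_0)$. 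Plugging in the values $\lac(u_0) = 2^{0+1} = 2 = \ldc(u_0)$ supplied by Theorem \ref{t:basis-compare} with $n=0$, I obtain $\lac(f) = 2|\alpha_1| = \ldc(f)$.

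For the final statement about $A_0$: since $B_0' = \{1_{C(\mathcal{C})}\}$, we have $A_0 = \C \cdot 1_{C(\mathcal{C})}$, and Definition \ref{d:cqms}(2) forces both $\lac$ and $\ldc$ to vanish on every scalar multiple of the identity, so they agree trivially on $A_0$. There is no real obstacle here — the corollary is a direct bookkeeping consequence of Theorem \ref{t:basis-compare} together with the fact that $A_1$ has only one non-scalar basis direction.
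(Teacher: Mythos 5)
Your proposal is correct and follows essentially the same route as the paper's proof: decompose $f$ via $B_1'$, use that both Lip-norms vanish on $\C 1_{C(\mathcal{C})}$ together with the triangle inequality in both directions to get $\Lip(f)=|\alpha_1|\Lip(u_0)$, and then substitute the value $2$ from Theorem \ref{t:basis-compare}. The only cosmetic difference is that the paper handles $A_0$ by noting $A_0\subseteq A_1$ while you observe directly that $A_0=\C 1_{C(\mathcal{C})}$; both are immediate.
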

\begin{proof}
Let $f \in A_1$, then by definition of $A_1$ and $B_1'$ there exist $\alpha_0, \alpha_1\in \C$ such that $f=\alpha_01_{C(\mathcal{C})}+\alpha_1 u_0$.

Now, since $\lac(\alpha_01_{C(\mathcal{C})})=0$ by Definition \ref{d:cqms}, we have by Theorem \ref{t:basis-compare} and since $\lac$ is a seminorm
\begin{align*}
    2|\alpha_1|&=|\alpha_1|\cdot 2^{0+1}=|\alpha_1|\lac(u_0)=\lac(\alpha_1u_0)\\
    &=  \lac(\alpha_01_{C(\mathcal{C})}+\alpha_1u_0-\alpha_01_{C(\mathcal{C})})\\
    &\leq \lac(\alpha_01_{C(\mathcal{C})}+\alpha_1u_0)+\lac(\alpha_01_{C(\mathcal{C})})\\
    &= \lac(f)\\
    &= \lac(\alpha_01_{C(\mathcal{C})}+\alpha_1u_0) \leq \lac(\alpha_01_{C(\mathcal{C})})+\lac(\alpha_1u_0)=\lac(\alpha_1u_0)\\
    &= 2|\alpha_1|.
\end{align*}
Thus $\lac(f)= 2|\alpha_1|.$ The same argument shows this is true for $\ldc(f)$ by Definition \ref{d:cqms} and Theorem  \ref{t:basis-compare} and the fact that $\ldc$ is a seminorm.  Thus they agree on $A_1$ and on $A_0$ since $A_0\subseteq A_1.$
\end{proof}

Thus far, we have been able to find formulas for the elements of the Hamel basis of Notation \ref{n:dense-subalg}, but now, we will develop formulas for $\lac$ and $\ldc$ on $A_n$ for all $n \geq 2$ that are built using the basis elements. We note that Corollary \ref{c:l-agree} already provided a formula for $\lac$ and $\ldc$ on $A_0$ and $A_1$. This will use some of the machinery developed in  the proof of Theorem \ref{t:basis-compare} along with the following technical lemma that will help us better understand the behavior of the difference quotients in the definition of $\ldc$.
\begin{lemma}\label{l:diff-basis}
Let   $z \in \N\setminus \{0\}$ and $j_0, \ldots, j_z \in \N$ such that $j_0<\cdots <j_z$. Let $k \in \N$ such that $k<j_z$. Assume $x,y \in \mathcal{C}$ such that $\dc(x,y)=2^{-k}$.
\begin{enumerate}
\item  $u_{j_z}(x)-u_{j_z}(y)=0$ if and only if  $u_ku_{j_z}(x)-u_ku_{j_z}(y)\in \{-2,2\},$ and

 $u_{j_z}(x)-u_{j_z}(y)\in \{-2,2\}$ if and only if   $u_ku_{j_z}(x)-u_ku_{j_z}(y)=0.$
    \item If $k=j_m$ for some $m \in \{0, \ldots, z-1\}$, then   
    \begin{enumerate}
    \item $\left(\prod_{l=0}^z u_{j_l} \right)(x)-\left(\prod_{l=0}^z u_{j_l} \right)(y)=0 $ if and only if
    \[
    \left(\prod_{l=0, l\neq m}^z u_{j_l} \right)(x)-\left(\prod_{l=0, l\neq m}^z u_{j_l} \right)(y)\in \{-2,2\}, 
    \] and
    \item $\left(\prod_{l=0}^z u_{j_l} \right)(x)-\left(\prod_{l=0}^z u_{j_l} \right)(y)\in \{-2,2\} $ if and only if 
    \[
    \left(\prod_{l=0, l\neq m}^z u_{j_l} \right)(x)-\left(\prod_{l=0, l\neq m}^z u_{j_l} \right)(y)=0. 
    \]
    \end{enumerate}
    \item If $j_m<k<j_{m+1}\leq j_z$ for some $m \in \{0, \ldots, z-1\}$, then  
    \begin{enumerate}
    \item $\left(\prod_{l=0}^z u_{j_l} \right)(x)-\left(\prod_{l=0}^z u_{j_l} \right)(y)=0 $ if and only if
    \[
    u_k\left(\prod_{l=0}^z u_{j_l} \right)(x)-u_k\left(\prod_{l=0}^z u_{j_l} \right)(y)\in \{-2,2\}, 
    \] and
    \item $\left(\prod_{l=0}^z u_{j_l} \right)(x)-\left(\prod_{l=0}^z u_{j_l} \right)(y)\in \{-2,2\} $ if and only if
    \[
    u_k\left(\prod_{l=0}^z u_{j_l} \right)(x)-u_k\left(\prod_{l=0}^z u_{j_l} \right)(y)=0. 
    \]
    \end{enumerate}
\end{enumerate}
Moreover, if we set
\begin{align*}
    C_k&=  \{a \in\cup_{n \in \N} B_n \mid a=u_p, p \in \N, p >k \\
    & \quad \quad \quad \quad \quad \quad \quad \quad \quad \text{ or } a=u_{j_0}\cdots u_{j_z}, j_0< \cdots < j_z \in \N, j_z>k\},
\end{align*}
 and set 
 \[
 C_k'=\{a \in C_k \mid a(x)-a(y) \in \{-2,2\}\}
 \]
 and $C_k''=C_k\setminus C_k'=\{a \in C_k \mid a(x)-a(y)=0\},$
 then the map
 \[
 \Delta_k:a \in C_k'\mapsto u_ka \in C_k''
 \]
 is a well-defined bijection.
\end{lemma}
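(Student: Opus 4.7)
The plan is to derive all four assertions from a single observation about what multiplication by $u_k$ does at the pair $(x,y)$. Since $\dc(x,y) = 2^{-k}$, the index $k$ is the first at which $x$ and $y$ differ, so $u_k(x) = -u_k(y)$ (and $u_l(x) = u_l(y)$ for $l<k$, which I will not need here). Moreover, every nonempty finite product of distinct $u_j$'s takes values in $\{-1,1\}$, so $a(x) - a(y) \in \{-2, 0, 2\}$ for any such $a$. The key identity is
\[
(u_k a)(x) - (u_k a)(y) = u_k(x)a(x) - u_k(y)a(y) = u_k(x)\bigl(a(x) + a(y)\bigr),
\]
using $u_k(y) = -u_k(x)$. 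Since $u_k(x), a(x), a(y) \in \{-1,1\}$, this yields the dichotomy: $a(x) = a(y)$ iff $(u_k a)(x) - (u_k a)(y) \in \{-2,2\}$, and $a(x) - a(y) \in \{-2,2\}$ iff $(u_k a)(x) = (u_k a)(y)$.

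Each of (1), (2), (3) is an instance of this dichotomy. For (1) I apply it directly with $a = u_{j_z}$. For (2), the assumption $k = j_m$ together with $u_k^2 = 1_{C(\mathcal{C})}$ gives $\prod_{l=0}^z u_{j_l} = u_k \cdot \prod_{l \neq m} u_{j_l}$, so I apply the dichotomy with $a = \prod_{l \neq m} u_{j_l}$. For (3), when $k \notin \{j_0,\ldots,j_z\}$, I apply the dichotomy with $a = \prod_{l=0}^z u_{j_l}$.

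For the moreover statement, I show that $\Delta_k$ is well-defined and is its own inverse. Given $a \in C_k'$, I first check $u_k a \in C_k$: if $a = u_p$ with $p > k$, then either $p=k$ is impossible (since $p>k$) and $u_k a = u_k u_p$ is a valid two-factor element of $C_k$; if $a = u_{j_0}\cdots u_{j_z}$ with $j_z > k$ and $k \notin \{j_0,\ldots,j_z\}$, then $u_k a$ is a strictly longer product of distinct factors, still with maximum index $j_z > k$; if $k = j_m$ for some $m$, then necessarily $m < z$ (because $k < j_z$), and $u_k a = \prod_{l \neq m} u_{j_l}$, whose maximum index is $j_z > k$. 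In every subcase $u_k a \in C_k$. The dichotomy then forces $(u_k a)(x) = (u_k a)(y)$, so $u_k a \in C_k''$. The symmetric argument shows $b \mapsto u_k b$ sends $C_k''$ into $C_k'$, and since $u_k^2 = 1_{C(\mathcal{C})}$ this is a two-sided inverse of $\Delta_k$, giving the required bijection.

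The only real obstacle is the bookkeeping in the moreover statement: one must track all of the sub-cases for what $u_k a$ looks like (in particular, that cancelling a $u_k$ inside $a$ still leaves a product whose maximum index exceeds $k$), and here the hypothesis $k < j_z$ is used crucially. Parts (1)-(3) themselves are immediate consequences of the identity $u_k(y) = -u_k(x)$.
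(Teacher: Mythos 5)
Your proof is correct and rests on the same mechanism as the paper's: the sign flip $u_k(y)=-u_k(x)$ at the first coordinate where $x$ and $y$ disagree, the factorization $(u_ka)(x)-(u_ka)(y)=u_k(x)\bigl(a(x)+a(y)\bigr)$, and the fact that every nonempty product of distinct $u_j$'s takes values in $\{-1,1\}$. Your single dichotomy cleanly subsumes the paper's separate and more laborious case analyses for parts (2) and (3) (which the paper carries out by hand on $v,w\in\{-1,1\}$), and your treatment of the well-definedness and two-sided invertibility of $\Delta_k$ matches the paper's argument.
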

\begin{proof}
(1) By definition of the $u_n's$ in Notation \ref{n:dense-subalg}, we have  that  $u_{j_z}(x)-u_{j_z}(y) \in \{-2,0,2\}$ and $ u_ku_{j_z}(x)-u_ku_{j_z}(y)\in \{-2,0,2\}$.  Now assume that $u_{j_z}(x)-u_{j_z}(y)=0$, then $u_{j_z}(x)+u_{j_z}(y)\neq 0,$ which implies that $u_{j_z}(x)+u_{j_z}(y) \in \{-2,2\}$ since $u_{j_z}(x), u_{j_z}(y) \in \{-1,1\}.$  Now, $u_k(x)=-u_k(y)$ by Case 2 of Theorem \ref{t:basis-compare}.  Hence
\begin{align*}
u_ku_{j_z}(x)-u_ku_{j_z}(y)&=u_k(x)u_{j_z}(x)+u_k(x)u_{j_z}(y)\\
& = u_k(x)(u_{j_z}(x)+u_{j_z}(y)) \in \{-2,2\}.
\end{align*}
The other direction is similar. And, the other if and only if is simply the negation of the first since the values considered are only $\{-2,0,2\}.$

(2) If $k=j_m$, then by the same argument as Case 2 of Theorem \ref{t:basis-compare}, we have that $u_{j_0}(x)=u_{j_0}(y), \ldots, u_{j_{m-1}}(x)=u_{j_{m-1}}(y), u_{j_m}(x)=-u_{j_m}(y)$. Hence
\begin{align*}
    r_1&=u_{j_0}\cdots u_{j_z}(x)-u_{j_0}\cdots u_{j_z}(y)\\
    & = u_{j_0}\cdots u_{j_{m-1}}(x)(u_{j_m}(x)\cdots u_{j_z}(x)-u_{j_m}(y)\cdots u_{j_z}(y))\\
    & = u_{j_0}\cdots u_{j_{m-1}}(x)(u_{j_m}(x)\cdots u_{j_z}(x)+u_{j_m}(x)\cdots u_{j_z}(y))\\
    & = u_{j_0}\cdots u_{j_m}(x)(u_{j_{m+1}}(x)\cdots u_{j_z}(x)+u_{j_{m+1}}(y)\cdots u_{j_z}(y))
\end{align*}
and
\begin{align*}
    r_2&=u_{j_0}\cdots u_{j_{m-1}}u_{j_{m+1}}\cdots u_{j_z}(x)-u_{j_0}\cdots u_{j_{m-1}}u_{j_{m+1}}\cdots u_{j_z}(y)\\
    & = u_{j_0}\cdots u_{j_{m-1}}(x)(u_{j_{m+1}}(x)\cdots u_{j_z}(x)-u_{j_{m+1}}(y)\cdots u_{j_z}(y))
\end{align*}
Now, we note that $u_{j_0}\cdots  u_{j_m}(x),u_{j_0}\cdots u_{j_{m-1}}(x)\in \{-1,1\}$, and thus not zero. Again by definition of the $u_n$'s, we have    $u_{j_{m+1}}(x)\cdots u_{j_z}(x)+u_{j_{m+1}}(y)\cdots u_{j_z}(y) \in \{-2,0,2\}$ and $ u_{j_{m+1}}(x)\cdots u_{j_z}(x)-u_{j_{m+1}}(y)\cdots u_{j_z}(y) \in \{-2,0,2\} $.

Thus, if $r_1=0,$ then we have $u_{j_{m+1}}(x)\cdots u_{j_z}(x)+u_{j_{m+1}}(y)\cdots u_{j_z}(y)=0,$ and  thus $u_{j_{m+1}}(x)\cdots u_{j_z}(x)-u_{j_{m+1}}(y)\cdots u_{j_z}(y)\neq 0$ since $u_{j_{m+1}}(x)\cdots u_{j_z}(x)\neq 0$ and $u_{j_{m+1}}(y)\cdots u_{j_z}(y) \neq0$, which implies that $r_2 \neq 0$ since $ u_{j_0}\cdots u_{j_{m-1}}(x)\neq 0$, and thus $r_2 \in \{-2,2\}.$

Next, assume we have $r_2 \in \{-2,2\}$. Set $v=u_{j_{m+1}}(x)\cdots u_{j_z}(x) \in \{-1,1\}$ and $w=u_{j_{m+1}}(y)\cdots u_{j_z}(y) \in \{-1,1\}$. First, consider $r_2=-2$. Then,   $v-w \in \{-2,2\}$ since $ u_{j_0}\cdots u_{j_{m-1}}(x)\neq 0$. If $v-w=-2,$ then $v+w=-2+w+w=2(w-1).$ If $w=-1$, then $v+w=-4,$ which is a contradiction since $v,w \in \{-1,1\}.$ Hence $w=1,$ and thus $v+w=0,$ which implies $r_1 =0.$ Now, if $v-w=2$, then $v+w=2+w+w=2(w+1)$. If $w=1,$ then $v+w=4,$ which is a contradiction since $v,w \in \{-1,1\}.$ Hence $w=-1$, and thus $v+w=0,$ which implies that $r_1=0.$  The case when $r_2=2$ is the same proof and provides $r_1=0$ as well. Hence, if $r_2\in \{-2,2\}$, then $r_1=0.$  This concludes (2)(a). 

(2)(b) This is simply the negation of (2)(a) since the values considered are only $\{-2,0,2\}.$

(3) This is the same argument as (2).

Now, we establish the bijection at the end of the theorem. Note that $C_k''=\{a \in C_k\mid a(x)-a(y)=0\}$ since $a(x)-a(y) \in \{-2,0,2\}$ by previous arguments. First, we show well-defined. Let $a \in C_k'$. Now, $a(x)-a(y)$ must be of the form given in (1), (2), (3). If $u_k$ is not part of the product forming $a$, then $a(x)-a(y)$ falls under the second line of (1) or (3)(b), and in either case, we have $u_ka \in C_k''.$  Now, if $u_k$ is part of the product forming $a$, then $a(x)-a(y)$ falls under (2)(b). Hence $a=u_{j_0} \cdots u_{k}\cdots u_{j_z}=u_{j_0} \cdots u_{j_m}\cdots u_{j_z}$.  Thus, by Notation \ref{n:dense-subalg}, we have 
\begin{equation}\label{eq:uk-vanish}
u_ka=u_{j_0} \cdots u_{k }^2\cdots u_{j_z}=u_{j_0} \cdots 1_{C(\mathcal{C})}\cdots u_{j_z}=u_{j_0}\cdots u_{j_{m-1}}u_{j_{m+1}}
\cdots u_{j_z}.\end{equation}
Thus, by (2)(b), we have that $u_ka(x)-u_ka(y)=0,$ which implies that $u_ka \in C_k''.$ Hence, the map $\Delta_k$ is well-defined. 

Now, let's establish surjectivity. Let $b \in C_k''.$ Thus $b(x)-b(y)=0$. If $u_k$ is not part of the product forming $b$, then $b(x)-b(y)$ falls under the first line of (1) or (3)(a). Hence, $u_kb \in C_k'$ in either case, and $\Delta_k(u_kb)=u_k(u_kb)=u_k^2b=b.$  Next, if $u_k$ is part of the product forming $b$, then $b(x)-b(y)$ falls under (2)(a), and the same argument of Expression \eqref{eq:uk-vanish} shows that $u_kb\in C_k'$, and thus, $\Delta_k(u_kb)=b$ as in the previous case.  Thus $\Delta_k$ is a surjection.

Next, injectivity.  Let $a,a' \in C_k'$ and $\Delta_k(a)=\Delta_k(a')$.  Then $u_ka=u_ka'$. Thus $u_k(u_ka)=u_k(u_ka)$ implies $u_k^2a=u_k^2a'$ implies $a=a'.$
\end{proof}

We note that we only consider formulas for elements in $A_n$ that are linear combinations of elements in $B_n$ rather than $B_n'$ since both $\lac$ and $\ldc$ are not affected by $1_{C(\mathcal{C})}$ by the argument in Corollary \ref{c:l-agree}. 
\begin{theorem}\label{t:ldc-formula}
Let $n \in \N, n \geq 2$. Let $f \in A_n$  such that  $f=\sum_{a \in B_n}\alpha_a a$, where  $\alpha_a \in \C$ for all $a \in B_n$.

Next, define 
\[\mathcal{C}_n=\{x \in \mathcal{C} \mid \forall k \geq n, x_k=0 \},\] which has $2^n$ elements. Let $x,y \in \mathcal{C}_n$ and denote $k_{x,y}=-\log_2 \dc(x,y)$.  Define \[\sigma_{x,y}=\{ a \in B_n \mid a(x)-a(y) \neq 0\}.\]

We then have
\[
\ldc(f)=\max_{x,y \in \mathcal{C}_n, x \neq y} \left\{ 2^{k_{x,y}+1}\left|\sum_{a \in \sigma_{x,y}} \pm_{a,x,y} \alpha_a\right| \right\},
\]
where $\pm_{a,x,y}$ is the sign  of $a(x)-a(y)$  and we note that $\{k_{x,y} \mid x,y \in \mathcal{C}_n\}=\{0, \ldots, n-1\}$ and the cardinality of $\sigma_{x,y}$ is $|\sigma_{x,y}|=2^{n-1}$.
\end{theorem}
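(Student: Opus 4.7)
The plan is to reduce the supremum defining $\ldc(f)$ over all pairs in $\mathcal{C}$ to a maximum over the finite set of pairs in $\mathcal{C}_n$, and then compute each resulting difference quotient explicitly using the fact that each $a \in B_n$ takes values in $\{-1,1\}$. The key observation enabling the reduction is that every $a \in B_n$ is a product $\prod_{j \in F} u_j$ with $F \subseteq \{0,\ldots,n-1\}$, so $a$, and therefore $f$, depends only on the first $n$ coordinates of a point in $\mathcal{C}$.

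For the reduction, I would take arbitrary $x, y \in \mathcal{C}$ with $x \neq y$ and let $k$ be the least coordinate where they disagree. If $k \geq n$, then $x$ and $y$ agree on all coordinates $0,\ldots,n-1$, so $f(x) = f(y)$ and the difference quotient is $0$. If $k < n$, define $x', y' \in \mathcal{C}_n$ by truncating: $x'_j = x_j$ and $y'_j = y_j$ for $j < n$, and $x'_j = y'_j = 0$ for $j \geq n$. Since $f$ depends only on the first $n$ coordinates, $f(x) = f(x')$ and $f(y) = f(y')$, and moreover $\dc(x',y') = 2^{-k} = \dc(x,y)$ because the least disagreement is preserved. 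Thus every nonzero quotient is realized by some pair in $\mathcal{C}_n$, and the supremum becomes a maximum over the finite set $\{(x,y) \in \mathcal{C}_n \times \mathcal{C}_n : x \neq y\}$.

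For the explicit formula, fix $x, y \in \mathcal{C}_n$ with $x \neq y$ and set $k = k_{x,y}$. For each $a \in B_n$, we have $a(x), a(y) \in \{-1,1\}$, so $a(x) - a(y) \in \{-2, 0, 2\}$, with the nonzero case occurring exactly when $a \in \sigma_{x,y}$. Writing $\pm_{a,x,y}$ for the sign of $a(x) - a(y)$, I obtain
\[
f(x) - f(y) = \sum_{a \in B_n} \alpha_a(a(x) - a(y)) = 2\sum_{a \in \sigma_{x,y}} \pm_{a,x,y}\, \alpha_a,
\]
and dividing by $\dc(x,y) = 2^{-k}$ yields the desired expression $2^{k+1}\bigl|\sum_{a \in \sigma_{x,y}} \pm_{a,x,y}\alpha_a\bigr|$. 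Taking the maximum over all such pairs gives the formula for $\ldc(f)$.

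It remains to verify the two counting statements. For $\{k_{x,y}\} = \{0,\ldots,n-1\}$, every value $k \in \{0,\ldots,n-1\}$ is attained by pairs in $\mathcal{C}_n$ agreeing on coordinates $0,\ldots,k-1$ and disagreeing at coordinate $k$. For $|\sigma_{x,y}| = 2^{n-1}$, let $D = \{j < n : x_j \neq y_j\}$, which is nonempty since $k \in D$. For $a = \prod_{j \in F} u_j \in B_n$, one checks $a(x) \neq a(y)$ iff $|F \cap D|$ is odd, and the number of nonempty $F \subseteq \{0,\ldots,n-1\}$ with $|F \cap D|$ odd equals $2^{|D|-1} \cdot 2^{n-|D|} = 2^{n-1}$ (the constraint forces $F \cap D \neq \emptyset$, so nonemptyness of $F$ is automatic). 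I expect the main conceptual obstacle to be the reduction step: verifying carefully that truncating $x,y$ to $\mathcal{C}_n$ preserves both $f(x)-f(y)$ and $\dc(x,y)$, so that the supremum over the infinite set $\mathcal{C}$ genuinely coincides with the maximum over the finite set $\mathcal{C}_n$.
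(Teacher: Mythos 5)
Your proposal is correct, and its core computation --- reducing the supremum to the finite set $\mathcal{C}_n$, then writing $f(x)-f(y)=2\sum_{a\in\sigma_{x,y}}\pm_{a,x,y}\alpha_a$ and dividing by $2^{-k_{x,y}}$ --- matches the paper's proof. Where you genuinely diverge is the cardinality count $|\sigma_{x,y}|=2^{n-1}$. The paper partitions $B_n$ by the position of the largest index relative to $k=k_{x,y}$ into sets $Z_k$, $S_k$, $C_k$, and then invokes a separate technical lemma (Lemma \ref{l:diff-basis}) establishing that multiplication by $u_k$ is a bijection between the elements of $C_k$ with nonzero difference and those with zero difference, which forces $|C_k'|=\tfrac{1}{2}|C_k|$. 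You instead observe directly that for $a=\prod_{j\in F}u_j$ one has $a(y)=(-1)^{|F\cap D|}a(x)$ where $D=\{j<n : x_j\neq y_j\}$, so $a\in\sigma_{x,y}$ iff $|F\cap D|$ is odd, and the count $2^{|D|-1}\cdot 2^{n-|D|}=2^{n-1}$ is immediate. Your parity argument is more elementary, avoids the case analysis of Lemma \ref{l:diff-basis} entirely, and gives a sharper description of $\sigma_{x,y}$ (it depends on the full disagreement set $D$, not just its minimum $k$); the paper's bijection lemma, on the other hand, is reused machinery that encodes the same parity phenomenon one multiplication-by-$u_k$ at a time. Your reduction step is also spelled out more carefully than the paper's (which asserts the restriction to $\mathcal{C}_n$ with only a brief remark, and with a small typo in the stated condition $\dc(x,y)\geq n$); your truncation argument showing that both $f(x)-f(y)$ and $\dc(x,y)$ are preserved is exactly the right justification.
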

\begin{proof}
By definition of $A_n,$ we have that $f(x)=f(y)$ for all $x,y \in \mathcal{C}$ such that $\dc(x,y)\geq n.$ Hence, we need only consider $x,y \in \mathcal{C}_n$ with $x \neq y$. Also, the cardinality $|\mathcal{C}_n|=|\mathrm{Powerset}(\{0, \ldots,n-1\})|=2^n.$ For ease of notation in the rest of the proof, set $k=k_{x,y}$, and we note that $k \in \{0, \ldots, n-1\}$ by definition of  $\mathcal{C}_n.$  If $k=0$, define $Z_0=\emptyset$ and if $k>0$, define
\begin{align*}
Z_k&= \ \ \{a \in B_n \mid a=u_p, p \in \{0, \ldots, k-1\} \\
&\quad \quad  \quad \quad \quad \quad \quad \text{ or } a=u_{j_0}\cdots u_{j_z}, j_0< \cdots< j_z \in \N, j_z<k\}.
\end{align*}
 Note by Case 1 of Theorem \ref{t:basis-compare}, we have that $a(x)-a(y)=0$ for all $a \in Z_k.$ Since $C(\mathcal{C})$ is commutative, we have that the cardinality
 \[
 |Z_k|=|\mathrm{Powerset}(\{0, \ldots, k-1\})|-1=2^k-1.
 \]
 Next, define
 \[
 S_k=\{a \in B_n\mid a=u_p, p=k \text{ or } a=u_{j_0}\cdots u_{j_z}, j_0<\cdots <j_z \in \N, j_z=k\}
 \]
 We note $Z_k \cap S_k=\emptyset.$ Also, 
 \[
 S_k=\{u_k\}\cup \{a \in B_n \mid a=u_{j_0}\cdots u_{j_z}, j_0<\cdots <j_z \in \N, j_z=k  \},
 \]
 where the two sets are disjoint.  Thus, similarly to $Z_k$, the cardinality
 \[
 |S_k|=|\{u_k\}|+\mathrm{Powerset}(\{0, \ldots, k-1\})|-1=1+2^k-1=2^k
 \]
 By Case 2 of Theorem \ref{t:basis-compare}, we have that $a(x)-a(y)\neq 0$ and thus $a(x)-a(y)\in \{-2,2\}$ since $a(x),a(y) \in \{-1,1\}$ by definition of the $u_n$'s.  
 
 Next, set
 \[
 C_k=B_n\setminus (Z_k\cup S_k).
 \]
 By disjoint sets, we have the cardinality, 
 \[
 |C_k|=|B_n|-(|Z_k|+|S_k|)=2^n-1-(2^k-1++2^k)=2^n-2^{k+1}.
 \]
 Also, note 
 \begin{align*}
 C_k&=\{a \in B_n \mid a=u_p, p \in \N, p >k\\
 &  \quad  \quad \quad \quad \quad \quad \text{ or } a=u_{j_0}\cdots u_{j_z}, j_0< \cdots < j_z \in \N, j_z>k\}.
 \end{align*}
 Now, define
 \[
 C_k'=\{a \in C_k \mid a(x)-a(y) \in \{-2,2\}\}.
 \]
 and $C_k''=C_k \setminus C_k'=\{a \in C_k \mid a(x)-a(y) =0\}$ of Lemma \ref{l:diff-basis}. Also, by Lemma \ref{l:diff-basis}, we have a bijection between $C_k'$ and $C_k''$. Hence, the cardinality
 \[
 |C_k'|=|C_k''|.
 \]
 Therefore, since all sets considered are finite as $B_n$ is finite, we have $|C_k'|=|C_k|-|C_k''|=|C_k|-|C_k'|,$ which implies $2|C_k'|=|C_k|$ which implies
 \[
 |C_k'|=\frac{1}{2} |C_k|=\frac{1}{2}(2^n-2^{k+1})=2^{n-1}-2^k.
 \]
 Now, we have that 
 \[
 S_k\cup C_k'=\{a \in B_n \mid a(x)-a(y)\neq 0\}=\{a \in B_n \mid a(x)-a(y)\in \{-2,2\}\}
 \]
 and are disjoint by construction. So, we set
 \[
 \sigma_{x,y}=S_k\cup C_k'
 \]
 and thus have, the cardinality
 \[
 |\sigma_{x,y}|=|S_k|+|C_k'|=2^k+2^{n-1}-2^k=2^{n-1}.
 \]
 Now
\begin{align*}
f(x)-f(y)&=\sum_{a \in B_n}\alpha_a(a(x)-a(y))=\sum_{a \in \sigma_{x,y}}\alpha_a(a(x)-a(y))\\
&=\sum_{a \in \sigma_{x,y}}\alpha_a(a(x)-a(y))=\sum_{a \in \sigma_{x,y}}\alpha_a(\pm_{a,x,y}2)\\
& =2\sum_{a \in \sigma_{x,y}}\pm_{a,x,y}\alpha_a,
\end{align*}
where $\pm_{a,x,y}$ is the sign of  $a(x)-a(y)\in \{-2,2\}$. 
Hence,
\[
\frac{|f(x)-f(y)|}{\dc(x,y)}=\frac{2\left|\sum_{a \in \sigma_{x,y}}\pm_{a,x,y}\alpha_a\right|}{2^{-k_{x,y}}}=2^{k_{x,y}+1}\left|\sum_{a \in \sigma_{x,y}}\pm_{a,x,y}\alpha_a\right|.
\]
Therefore, as $\mathcal{C}_n$ is finite, the proof is complete. 
\end{proof}

Next, we find a similar formula for $\lac$, which will reveal some important and crucial differences between the behavior of $\lac$ and $\ldc$.

\begin{theorem}\label{t:lac-formula}
Let $n \in \N, n \geq 2$. Let $f \in A_n$  such that  $f=\sum_{a \in B_n}\alpha_a a$, where  $\alpha_a \in \C$ for all $a \in B_n$.

Next, define 
\[\mathcal{C}_n=\{x \in \mathcal{C} \mid \forall k \geq n, x_k=0 \},\] which has $2^n$ elements. For each $k \in \{0, \ldots, n-1\}$, set
\[
\rho_k=\left\{a \in B_n \mid a-E_k(a)\neq 0_{\mathcal{C}}\right\}=\{a \in B_n \mid E_k(a)=0_{C(\mathcal{C})}\}.
\]

We then have
\[
\lac(f)=\max_{k \in \{0,\ldots, n-1\}} \left\{ \max_{x \in \mathcal{C}_n} \left\{2^{k +1}\left|\sum_{a \in \rho_k} \pm_{a,x} \alpha_a\right| \right\}\right\},
\]
where $\pm_{a,x}$ is the sign of $a(x)$   and we note that   the cardinality of $\rho_k$ is $|\rho_k|=2^n-2^k$.
\end{theorem}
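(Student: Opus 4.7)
The plan is to apply Theorem \ref{t:lac}(2), which gives
\[
\lac(f) = \max_{k \in \{0,\ldots, n-1\}}\left\{\frac{\|f-E_k(f)\|_\infty}{2^{-(k+1)}}\right\},
\]
since $f \in A_n$ with $n \geq 2$, and then to compute each $\|f - E_k(f)\|_\infty$ explicitly. First I would use linearity of $E_k$ to write $f - E_k(f) = \sum_{a \in B_n} \alpha_a (a - E_k(a))$. The key structural input is Lemma \ref{l:en-vanish} (together with part (7) of Lemma \ref{l:en} for the single-factor case $a = u_p$): for any basis element $a = \prod_{j \in F}u_j \in B_n$, one has $E_k(a) = a$ when $\max F < k$ and $E_k(a) = 0_{C(\mathcal{C})}$ when $\max F \geq k$. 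Consequently $a - E_k(a)$ is either $0$ or $a$, and therefore
\[
f - E_k(f) = \sum_{a \in \rho_k} \alpha_a\, a,
\]
exactly matching the definition of $\rho_k$ in the statement.

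Next I would reduce the sup-norm on $\mathcal{C}$ to the finite maximum over $\mathcal{C}_n$. Each $a \in B_n$ is a product of $u_j$'s with $j \in \{0, \ldots, n-1\}$, so $a(x)$ depends only on the coordinates $x_0, \ldots, x_{n-1}$. Any $x \in \mathcal{C}$ agrees on these coordinates with a unique $x' \in \mathcal{C}_n$, so
\[
\|f - E_k(f)\|_\infty = \max_{x \in \mathcal{C}_n}\left|\sum_{a \in \rho_k}\alpha_a\, a(x)\right|.
\]
Since $u_j(\mathcal{C}) = \{-1,1\}$ by Notation \ref{n:dense-subalg}, each $a(x) \in \{-1, 1\}$ for $x \in \mathcal{C}_n$, so writing $\pm_{a,x}$ for the sign of $a(x)$ gives
\[
\|f - E_k(f)\|_\infty = \max_{x \in \mathcal{C}_n}\left|\sum_{a \in \rho_k} \pm_{a,x}\alpha_a\right|.
\]
Multiplying by $2^{k+1}$ and taking the maximum over $k \in \{0, \ldots, n-1\}$ yields the claimed formula.

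For the cardinality claim, I would count the nonempty subsets $F$ of $\{0, \ldots, n-1\}$ such that $\max F \geq k$, which by the Hamel basis property of $B_n$ from Notation \ref{n:dense-subalg} is in bijection with $\rho_k$. The total number of nonempty subsets of $\{0, \ldots, n-1\}$ is $2^n - 1$, and the ones fully contained in $\{0, \ldots, k-1\}$ number $2^k - 1$, so $|\rho_k| = (2^n - 1) - (2^k - 1) = 2^n - 2^k$.

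The main obstacle is step two, where one must be careful to handle both the single-factor basis elements $u_p$ (governed by Lemma \ref{l:en}(7)) and the higher-order products (governed by Lemma \ref{l:en-vanish}) uniformly, and to verify that the resulting indexing set $\{a \in B_n \mid a - E_k(a) \neq 0_{C(\mathcal{C})}\}$ coincides with $\{a \in B_n \mid E_k(a) = 0_{C(\mathcal{C})}\}$. Everything else is straightforward bookkeeping with finite maxima and sign counting.
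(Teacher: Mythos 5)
Your proposal is correct and follows essentially the same route as the paper's proof: both invoke Theorem \ref{t:lac}(2), use Lemma \ref{l:en-vanish} together with Lemma \ref{l:en}(7) to reduce $f - E_k(f)$ to $\sum_{a \in \rho_k}\alpha_a a$, restrict the sup-norm to $\mathcal{C}_n$, and count $|\rho_k|$ by subtracting the $2^k - 1$ basis elements supported in $\{0,\ldots,k-1\}$ from $|B_n| = 2^n - 1$. No gaps.
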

\begin{proof}
The cardinality of $\mathcal{C}_n$ was already determined in Theorem \ref{t:ldc-formula}. Let $k \in \{0, \ldots, n-1\}.$ Since $f \in A_n$ and $E_k(f)\in A_k\subseteq A_n$ for all $k \in \{0, \ldots, n-1\}$, we have that $(f-E_k(f))(x)$ for $x \in \mathcal{C}$ is only determined by the values $x_0, \ldots, x_{n-1}$ by definition of the $u_n$'s.  Hence, 
\[
\|f-E_k(f)\|_\infty=\sup_{x \in \mathcal{C}} |(f-E_k(f))(x)|= \max_{x \in \mathcal{C}_n} |(f-E_k(f))(x)|.
\]
Set \[Z_k=\{a \in B_n\mid a-E_k(a)=0_{C(\mathcal{C})}\}.\]

Note that  $1_{C(\mathcal{C})} \not\in B_n$ and thus if $a \in B_n,$ then $a=u_{j_0}\cdots u_{j_z}$ for $j_0<\cdots < j_z \leq n-1\in \N$ or $a=u_p$ for some $p \in \{0, \ldots, n-1\}$. Thus $E_0(u_p)=0_{C(\mathcal{C})}$ by (7)(a) of Lemma \ref{l:en} and $E_0(u_{j_0}\cdots u_{j_z})=0_{C(\mathcal{C})}$  by Lemma \ref{l:en-vanish}. In either case we have that $a-E_0(a)=a\neq 0_{C(\mathcal{C})}\}.$ Therefore $Z_0=\emptyset.$  

Next, assume that $k>0$. By a similar argument, we have by Lemma \ref{l:en-vanish} and (7)(b)(i) of Lemma \ref{l:en} that
\begin{align*}
Z_k&=
 \{a \in B_n \mid a=u_p, p \in \{0, \ldots, k-1\}\\
 & \quad \quad \quad \quad \quad \quad  \text{ or } a=u_{j_0}\cdots u_{j_z}, j_0< \cdots< j_z \in \N, j_z<k\}.
\end{align*}
By the proof of Theorem \ref{t:ldc-formula}, we have that the cardinality
\[
|Z_k|=2^k-1.
\]
Now, set
\[
\rho_k=B_n\setminus Z_k=\{a \in B_n \mid a-E_k(a)\neq 0_{C(\mathcal{C})}\}=\{a \in B_n \mid E_k(a)=0_{C(\mathcal{C})}\}
\]
since $E_k(a) \in \{a, 0_{C(\mathcal{C})}\}$ by Lemma \ref{l:en-vanish} and (7) of Lemma \ref{l:en}. 

Next, the cardinality
\[
|\rho_k|= |B_n|-|Z_k|=2^n-1-(2^k-1)=2^n-2^k.
\]
Next, if $x \in \mathcal{C}_n$, we have by linearity of $E_k$ that
\begin{align*}
    f-E_k(f)= \sum_{a \in B_n}\alpha_a (a-E_k(a))=\sum_{a \in \rho_k}\alpha_a (a-E_k(a))= \sum_{a \in \rho_k}\alpha_a a.
\end{align*}
Thus, by the beginning of the proof we have
\begin{align*}
\|f-E_k(f)\|_\infty& = \max_{x \in \mathcal{C}_n} |(f-E_k(f))(x)|= \max_{x \in \mathcal{C}_n} \left|\left(\sum_{a \in \rho_k}\alpha_a a\right)(x)\right|\\
&=\max_{x \in \mathcal{C}_n}\left| \sum_{a \in \rho_k}\alpha_a a(x) \right|= \max_{x \in \mathcal{C}_n}\left| \sum_{a \in \rho_k}\pm_{a,x} \alpha_a  \right|
\end{align*}
since $a(x) \in \{-1,1\}$ by Notation \ref{n:dense-subalg}, and $\pm_{a,x}$ is the sign  of  $a(x)\in \{-1,1\}$. Hence by Theorem \ref{t:lac}, we have
\begin{align*}
    \lac(f)&=\max_{k \in \{0, \ldots, n-1\}}\left\{\frac{\|f-E_k(f)\|_\infty}{2^{-(k+1)}}\right\}\\
    &= \max_{k \in \{0, \ldots, n-1\}} \left\{ \max_{x \in \mathcal{C}_n}\left\{2^{k+1}\left| \sum_{a \in \rho_k}\pm_{a,x} \alpha_a  \right|\right\}\right\},
\end{align*}
which completes the proof.
\end{proof}

\section{Separating   \texorpdfstring{$\ldc$ and $\lac$}{Lg}}\label{s:separating}
  Theorems \ref{t:ldc-formula} and Theorem \ref{t:lac-formula} provide us with a general idea of the structure of these Lip-norms with respect to the structure of the dense subalgebra $\cup_{n \in \N} A_n.$ However, these formulas also gift insight into the differences between $\ldc$ and $\lac$.  In particular, the cardinality between $\sigma_{x,y}$ and $\rho_k$. The cardinality of  $\sigma_{x,y}$ on depends on the dimension of the algebra $A_n$ even though the set  $\sigma_{x,y}$ is built from the first coordinate $x$ and $y$ disagree.  However, the cardinality of  $\rho_k$ depends on the dimension of $A_n$ and the dimension of the  space $E_k$ projects onto, $A_k$. Therefore, $\lac$ captures more information from the coefficients of the element $f$ being entered into the Lip-norm than $\ldc.$  We will see that this happens at $A_2$ in comparing Theorem \ref{t:ldc2-formula} and Theorem \ref{t:lac2-formula}, where only pairs of coefficients are consider in the formula for $\ldc$ whereas pairs and triples of coefficients are consider in $\lac.$ So, the hope is that we can separate $\ldc$ and $\lac$ already on $A_2$ without having to go to higher dimension.  In Theorem \ref{t:lip-sep}, we accomplish this and provide many elements that separate $\lac$ and $\ldc$ on $A_2,$ but first, we take a closer look at our formulas on $A_2.$ As seen in the proof of Corollary \ref{c:l-agree}, we do not need to consider $1_{C(\mathcal{C})}$ in our calculations since the seminorms $\ldc$ and 
$\lac$ vanish on $1_{C(\mathcal{C})}$.
\begin{theorem}\label{t:ldc2-formula}
Let $f \in A_2$ such that $f=\alpha_0 u_0+\alpha_1 u_1+\alpha_2 u_0 u_1$ for some $\alpha_0, \alpha_1, \alpha_2 \in \C$.

It holds that
\[
\ldc(f)=\max\left\{\begin{array}{l}
2|\alpha_0-\alpha_2|,\ 2|\alpha_0-\alpha_1|, \  2|\alpha_0+\alpha_1|,\ 2|\alpha_0+\alpha_2|, \\
 4|\alpha_1-\alpha_2|, \ 4|\alpha_1+\alpha_2|
\end{array} \right\}.
\]
\end{theorem}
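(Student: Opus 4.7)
The plan is to specialize Theorem \ref{t:ldc-formula} to $n=2$ and carry out a finite enumeration, since $|\mathcal{C}_2|=4$ produces only $\binom{4}{2}=6$ unordered pairs to check and $B_2=\{u_0,u_1,u_0u_1\}$ has just three elements. Note first that $1_{C(\mathcal{C})}$ contributes nothing to $\ldc$ (as in Corollary \ref{c:l-agree}), so writing $f=\alpha_0u_0+\alpha_1u_1+\alpha_2u_0u_1$ is without loss of generality for the computation of $\ldc(f)$. The enumeration of $\mathcal{C}_2$ is parameterized by $(x_0,x_1)\in\{0,1\}^2$; denote these four points $x^{00},x^{01},x^{10},x^{11}$, and record the values $u_n(x)=2x_n-1\in\{-1,1\}$ and the product $u_0u_1(x)$ at each point in a small table.

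The next step is to walk through the six pairs. For pairs first disagreeing at coordinate $0$ one has $k_{x,y}=0$ and the prefactor $2^{k_{x,y}+1}=2$; for pairs first disagreeing at coordinate $1$ (namely $\{x^{00},x^{01}\}$ and $\{x^{10},x^{11}\}$) one has $k_{x,y}=1$ and prefactor $4$. For each pair I would read off from the table exactly which two of the three basis elements $u_0,u_1,u_0u_1$ satisfy $a(x)-a(y)\in\{-2,2\}$ (Theorem \ref{t:ldc-formula} guarantees there are exactly $|\sigma_{x,y}|=2^{n-1}=2$ such elements), and then record the signs $\pm_{a,x,y}$. For instance, the pair $(x^{00},x^{01})$ at $k=1$ has $u_0$ constant while $u_1$ and $u_0u_1$ differ with opposite signs, contributing $4|\alpha_1-\alpha_2|$; the pair $(x^{10},x^{11})$ at $k=1$ contributes $4|\alpha_1+\alpha_2|$. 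The four $k=0$ pairs contribute the four terms $2|\alpha_0\pm\alpha_1|$ and $2|\alpha_0\pm\alpha_2|$ in an analogous way.

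Finally, taking the maximum of these six expressions and invoking Theorem \ref{t:ldc-formula} yields the stated formula. There is no genuine obstacle; the only risk is bookkeeping error, so I would organize the six computations in a single table whose rows list the pair, the value $k_{x,y}$, the two elements of $\sigma_{x,y}$, their signs, and the resulting quantity $2^{k_{x,y}+1}\bigl|\sum_{a\in\sigma_{x,y}}\pm_{a,x,y}\alpha_a\bigr|$. Presenting the argument in this tabular style keeps the sign conventions transparent and makes the match with the six terms in the right-hand side of the theorem immediate.
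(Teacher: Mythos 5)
Your proposal is correct and matches the paper's proof in substance: both specialize Theorem \ref{t:ldc-formula} to $n=2$ and enumerate the pairs in $\mathcal{C}_2$, reading off $\sigma_{x,y}$ and the signs in each case (the paper lists ordered pairs, giving twelve cases that collapse to your six by symmetry of the absolute value). The six contributions you identify agree with the paper's, so the tabular enumeration you describe would yield exactly the stated formula.
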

\begin{proof}
Note $B_2=\{u_0, u_1, u_0u_1\}$ by Notation \ref{n:dense-subalg}. By Theorem \ref{t:ldc-formula}, we have that 
\[
\ldc(f)=\max_{x,y \in \mathcal{C}_2, x \neq y} \left\{ 2^{k_{x,y}+1}\left|\sum_{a \in \sigma_{x,y}} \pm_{a,x,y} \alpha_a\right| \right\},
\]
where we set $\alpha_{u_0}=\alpha_0, \alpha_{u_1}=\alpha_1, \alpha_{u_0u_1}=\alpha_2,  $ and for all $x,y \in \mathcal{C}_2, x \neq y$, we have  $\sigma_{x,y}=\{a \in B_2\mid a(x)-a(y) \neq 0\}$ and $\pm_{a,x,y}$ is the sign of $a(x)-a(y)$ for all $a \in B_2,$ and $k_{x,y}=-\log_2 \dc(x,y).$
Let $x,y\in \mathcal{C}_2, x\neq y.$ First, assume that $k_{x,y}=0.$  Thus $x_0\neq y_0.$
\begin{enumerate}
    \item If $x_0=0, y_0=1$, then  $u_0(x)=2\eta_0(x)-1=2\cdot x_0-1=-1,u_0(y)=2\eta_0(y)-1=2\cdot y_0-1=1$, and hence $u_0(x)-u_0(y)=-2$.
    \begin{enumerate}
        \item If $x_1=0,y_1=0$, then $u_1(x)-u_1(y)=1-1=0$ and $u_0(x)u_1(x)-u_0(y)u_1(y)=-1\cdot (-1)-1\cdot(- 1)=2.$ Thus, we have
        \[
        2^{k_{x,y}+1}\left|\sum_{a \in \sigma_{x,y}} \pm_{a,x,y} \alpha_a\right|=2|-\alpha_0+\alpha_2|=2|\alpha_0-\alpha_2|.
        \]
        \item If $x_1=0, y_1=1,$ then $u_1(x)-u_1(y)=-1-1=-2$ and $ u_0(x)u_1(x)-u_0(y)u_1(y)=-1\cdot (-1)-1\cdot 1=1-1=0.$ Thus, we have
         \[
        2^{k_{x,y}+1}\left|\sum_{a \in \sigma_{x,y}} \pm_{a,x,y} \alpha_a\right|=2|-\alpha_0-\alpha_1|=2|\alpha_0+\alpha_1|.
        \]
        \item If $x_1=1, y_1=0,$ then $u_1(x)-u_1(y)=1-(-1)=2$ and $ u_0(x)u_1(x)-u_0(y)u_1(y)=-1\cdot 1-1\cdot (-1)=-1+1=0.$ Thus, we have
         \[
        2^{k_{x,y}+1}\left|\sum_{a \in \sigma_{x,y}} \pm_{a,x,y} \alpha_a\right|=2|-\alpha_0+\alpha_1|=2|\alpha_0-\alpha_1|.
        \]
         \item If $x_1=1, y_1=1,$ then $u_1(x)-u_1(y)=1-(1)=0$ and $ u_0(x)u_1(x)-u_0(y)u_1(y)=-1\cdot 1-1\cdot (1)=-1-1=-2.$ Thus, we have
         \[
        2^{k_{x,y}+1}\left|\sum_{a \in \sigma_{x,y}} \pm_{a,x,y} \alpha_a\right|=2|-\alpha_0-\alpha_2|=2|\alpha_0+\alpha_2|.
        \]
    \end{enumerate}
    \item If $x_0=1,y_0=0$, then $u_0(x)=1,$ $u_0(y)=-1$, and $u_0(x)-u_0(y)=1-(-1)=2$.
     \begin{enumerate}
        \item If $x_1=0,y_1=0$, then $u_1(x)-u_1(y)=-1-(-1)=0$ and $u_0(x)u_1(x)-u_0(y)u_1(y)=1\cdot (-1) 1-(-1)\cdot (-1)=-2.$ Thus, we have
        \[
        2^{k_{x,y}+1}\left|\sum_{a \in \sigma_{x,y}} \pm_{a,x,y} \alpha_a\right|=2|\alpha_0-\alpha_2|.
        \]
        \item If $x_1=0, y_1=1,$ then $u_1(x)-u_1(y)=-1-1=-2$ and $ u_0(x)u_1(x)-u_0(y)u_1(y)=1\cdot (-1)-(-1)\cdot 1=-1+1=0.$ Thus, we have
         \[
        2^{k_{x,y}+1}\left|\sum_{a \in \sigma_{x,y}} \pm_{a,x,y} \alpha_a\right|=2|\alpha_0-\alpha_1|.
        \]
        \item If $x_1=1, y_1=0,$ then $u_1(x)-u_1(y)=1-(-1)=2$ and $ u_0(x)u_1(x)-u_0(y)u_1(y)=1\cdot 1-(-1)\cdot (-1)=1-1=0.$ Thus, we have
         \[
        2^{k_{x,y}+1}\left|\sum_{a \in \sigma_{x,y}} \pm_{a,x,y} \alpha_a\right|=2|\alpha_0+\alpha_1|.
        \]
         \item If $x_1=1, y_1=1,$ then $u_1(x)-u_1(y)=1-(1)=0$ and $ u_0(x)u_1(x)-u_0(y)u_1(y)=1\cdot 1-(-1)\cdot (1)=1+1=2.$ Thus, we have
         \[
        2^{k_{x,y}+1}\left|\sum_{a \in \sigma_{x,y}} \pm_{a,x,y} \alpha_a\right|=2|\alpha_0+\alpha_2|.
        \]
    \end{enumerate}
\end{enumerate}
Second, assume that $k_{x,y}=1$, then $x_0=y_0$ and thus $u_0(x)-u_0(y)=0,$ and $x_1\neq y_1.$
\begin{enumerate}
    \item If $x_0=y_0=0$, then $u_0(x)=-1$ and $u_0(y)=-1$ and
    \begin{enumerate}
        \item If $x_1=0,y_1=1$, then $u_1(x)-u_1(y)=-1-1=-2$, and $ u_0(x)u_1(x)-u_0(y)u_1(y)=-1\cdot(-1)-(-1)\cdot 1=1+1=2. $ Thus, we have
         \[
        2^{k_{x,y}+1}\left|\sum_{a \in \sigma_{x,y}} \pm_{a,x,y} \alpha_a\right|=4|-\alpha_1+\alpha_2|=4|\alpha_1-\alpha_2|.
        \]
         \item If $x_1=1,y_1=0$, then $u_1(x)-u_1(y)=1-(-1)=2$, and $ u_0(x)u_1(x)-u_0(y)u_1(y)=-1\cdot(1)-(-1)\cdot (-1)=-1-1=-2. $ Thus, we have
         \[
        2^{k_{x,y}+1}\left|\sum_{a \in \sigma_{x,y}} \pm_{a,x,y} \alpha_a\right|=4|\alpha_1-\alpha_2|.
        \]
    \end{enumerate}
    \item If $x_0=y_0=1,$ then $u_0(x)=1$ and $u_0(y)=1$ and
     \begin{enumerate}
        \item If $x_1=0,y_1=1$, then $u_1(x)-u_1(y)=-1-1=-2$, and $ u_0(x)u_1(x)-u_0(y)u_1(y)=1\cdot(-1)-(1)\cdot 1=-1-1=-2. $ Thus, we have
         \[
        2^{k_{x,y}+1}\left|\sum_{a \in \sigma_{x,y}} \pm_{a,x,y} \alpha_a\right|=4|-\alpha_1-\alpha_2|=4|\alpha_1+\alpha_2|.
        \]
         \item If $x_1=1,y_1=0$, then $u_1(x)-u_1(y)=1-(-1)=2$, and $ u_0(x)u_1(x)-u_0(y)u_1(y)=1\cdot(1)-(1)\cdot (-1)=1+1=2. $ Thus, we have
         \[
        2^{k_{x,y}+1}\left|\sum_{a \in \sigma_{x,y}} \pm_{a,x,y} \alpha_a\right|=4|\alpha_1+\alpha_2|.
        \]
    \end{enumerate}
\end{enumerate}
Thus, all cases are finished since $k_{x,y} \leq 1$, and the proof is complete.
\end{proof}
Next, we calculate $\lac$ on $A_2.$
\begin{theorem}\label{t:lac2-formula}
Let $f \in A_2$ such that $f=\alpha_0 u_0+\alpha_1 u_1+\alpha_2 u_0 u_1$ for some $\alpha_0, \alpha_1, \alpha_2 \in \C$.

It holds that
\[
\lac(f)=\max\left\{\begin{array}{l}
2|\alpha_0+\alpha_1-\alpha_2|,\ 2|\alpha_0-\alpha_1+\alpha_2|, \ 2|\alpha_0-\alpha_1-\alpha_2|,\\
2|\alpha_0+\alpha_1+\alpha_2|, \ 4|\alpha_1-\alpha_2|, \ 4|\alpha_1+\alpha_2|
\end{array} \right\}.
\]
\end{theorem}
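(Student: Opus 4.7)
The plan is to specialize Theorem \ref{t:lac-formula} to $n=2$, identify the index sets $\rho_0$ and $\rho_1$ explicitly, enumerate the four points of $\mathcal{C}_2$ and the corresponding signed sums, and then collect the result. Since $B_2 = \{u_0, u_1, u_0 u_1\}$ by Notation \ref{n:dense-subalg}, and since the statement of Theorem \ref{t:lac2-formula} uses the relabeling $\alpha_{u_0} = \alpha_0$, $\alpha_{u_1} = \alpha_1$, $\alpha_{u_0 u_1} = \alpha_2$, the formula of Theorem \ref{t:lac-formula} specializes to
\[
\lac(f) = \max_{k \in \{0,1\}}\left\{\max_{x \in \mathcal{C}_2}\left\{ 2^{k+1}\left|\sum_{a \in \rho_k}\pm_{a,x}\alpha_a\right|\right\}\right\}.
\]

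The first step is to identify $\rho_0$ and $\rho_1$. By Lemma \ref{l:en}(7)(a) and Lemma \ref{l:en-vanish}, $E_0$ annihilates every element of $B_2$, so $\rho_0 = B_2 = \{u_0, u_1, u_0 u_1\}$. For $k=1$, Lemma \ref{l:en}(7)(b)(i) gives $E_1(u_0) = u_0 \neq 0_{C(\mathcal{C})}$, while Lemma \ref{l:en}(7)(b)(ii) gives $E_1(u_1) = 0_{C(\mathcal{C})}$ and Lemma \ref{l:en-vanish} gives $E_1(u_0 u_1) = 0_{C(\mathcal{C})}$, so $\rho_1 = \{u_1, u_0 u_1\}$. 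Note this matches the cardinality prediction $|\rho_k| = 2^n - 2^k$ of Theorem \ref{t:lac-formula}.

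The second step is to enumerate the four points of $\mathcal{C}_2$, namely those $x$ with $(x_0, x_1) \in \{0,1\}^2$ (and $x_j = 0$ for $j \geq 2$), and record the triple $(u_0(x), u_1(x), u_0 u_1(x)) \in \{-1,1\}^3$. The four sign patterns that arise are $(-1,-1,1)$, $(1,-1,-1)$, $(-1,1,-1)$, and $(1,1,1)$. Plugging these sign patterns into the inner sums for $k=0$ (using all three coefficients) yields the four quantities $-\alpha_0 - \alpha_1 + \alpha_2$, $\alpha_0 - \alpha_1 - \alpha_2$, $-\alpha_0 + \alpha_1 - \alpha_2$, $\alpha_0 + \alpha_1 + \alpha_2$; after taking absolute values and multiplying by $2^{0+1} = 2$, these simplify (using $|-z| = |z|$) to the first four quantities in the claimed maximum. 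For $k=1$ the inner sum only involves $\alpha_1$ and $\alpha_2$, and the four sign patterns collapse to two distinct values up to sign; multiplying the absolute value by $2^{1+1} = 4$ yields the final two quantities $4|\alpha_1 - \alpha_2|$ and $4|\alpha_1 + \alpha_2|$.

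The third step is simply to take the maximum over all these six quantities and observe that it agrees with the expression in the statement. There is no real obstacle here: the argument is a bookkeeping exercise combining Theorem \ref{t:lac-formula} with the explicit values of $E_k$ on basis monomials from Lemma \ref{l:en} and Lemma \ref{l:en-vanish}. The only point requiring a small amount of care is ensuring that the two sign patterns per case for $k=1$ really do produce only two distinct absolute values (rather than four), which follows because the $\alpha_0$-dependent coordinate $u_0(x)$ does not appear in the sum once $u_0 \notin \rho_1$.
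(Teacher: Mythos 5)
Your proposal is correct and follows essentially the same route as the paper's own proof: specialize Theorem \ref{t:lac-formula} to $n=2$, compute $\rho_0=\{u_0,u_1,u_0u_1\}$ and $\rho_1=\{u_1,u_0u_1\}$ from Lemma \ref{l:en}(7) and Lemma \ref{l:en-vanish}, and enumerate the four sign patterns over $\mathcal{C}_2$ for each $k$. The only difference is that the paper writes out all eight cases explicitly rather than observing up front that the $k=1$ patterns collapse to two distinct absolute values; your shortcut is valid for the reason you give.
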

\begin{proof}
Note $B_2=\{u_0, u_1, u_0u_1\}$  by Notation \ref{n:dense-subalg}. By Theorem \ref{t:lac-formula}, we have
\[
\lac(f)=\max_{k \in \{0,1\}} \left\{ \max_{x \in \mathcal{C}_2} \left\{2^{k +1}\left|\sum_{a \in \rho_k} \pm_{a,x} \alpha_a\right| \right\}\right\},
\]
where we set $\alpha_{u_0}=\alpha_0, \alpha_{u_1}=\alpha_1, \alpha_{u_0u_1}=\alpha_2,  $ and $\rho_k=\{a \in B_2 \mid E_k(a)=0_{C(\mathcal{C})}\}$ for all $k \in \{0,1\}$ and $\pm_{a,x}$ is the sign of $a(x)$ for all $a \in B_2,x \in \mathcal{C}_2.$

First, let $k=0.$ By Lemma \ref{l:en-vanish} and (7) of Lemma \ref{l:en}, we have that $\rho_0=\{u_0, u_1, u_0u_1 \}.$ Let $x \in \mathcal{C}_2.$
\begin{enumerate}
    \item If $x_0=x_1=0$, then $u_0(x)=-1, u_1(x)=-1, u_0(x)u_1(x)=1$, and thus 
    \[
     2^{k+1}\left| \sum_{a \in \rho_k}\pm_{a,x} \alpha_a  \right| =2|-\alpha_0-\alpha_1+\alpha_2|=2|\alpha_0+\alpha_1-\alpha_2|.
    \]
      \item If $x_0=0,x_1=1$, then $u_0(x)=-1, u_1(x)=1, u_0(x)u_1(x)=-1$, and thus 
    \[
     2^{k+1}\left| \sum_{a \in \rho_k}\pm_{a,x} \alpha_a  \right| =2|-\alpha_0+\alpha_1-\alpha_2|=2|\alpha_0-\alpha_1+\alpha_2|.
    \]
     \item If $x_0=1,x_1=0$, then $u_0(x)=1, u_1(x)=-1, u_0(x)u_1(x)=-1$, and thus 
    \[
     2^{k+1}\left| \sum_{a \in \rho_k}\pm_{a,x} \alpha_a  \right| =2|\alpha_0-\alpha_1-\alpha_2|.
    \]
     \item If $x_0=1,x_1=1$, then $u_0(x)=1, u_1(x)=1, u_0(x)u_1(x)=1$, and thus 
    \[
     2^{k+1}\left| \sum_{a \in \rho_k}\pm_{a,x} \alpha_a  \right| =2|\alpha_0+\alpha_1+\alpha_2|.
    \]
\end{enumerate}
Second, let $k=1.$ By Lemma \ref{l:en-vanish} and (7) of Lemma \ref{l:en}, we have $\rho_0=\{ u_1, u_0u_1 \}.$ Let $x \in \mathcal{C}_2.$
\begin{enumerate}
     \item If $x_0=x_1=0$, then $u_0(x)=-1, u_1(x)=-1, u_0(x)u_1(x)=1$, and thus 
    \[
     2^{k+1}\left| \sum_{a \in \rho_k}\pm_{a,x} \alpha_a  \right| =4| -\alpha_1+\alpha_2|=4| \alpha_1-\alpha_2|.
    \]
      \item If $x_0=0,x_1=1$, then $u_0(x)=-1, u_1(x)=1, u_0(x)u_1(x)=-1$, and thus 
    \[
     2^{k+1}\left| \sum_{a \in \rho_k}\pm_{a,x} \alpha_a  \right| =4| \alpha_1-\alpha_2|.
    \]
     \item If $x_0=1,x_1=0$, then $u_0(x)=1, u_1(x)=-1, u_0(x)u_1(x)=-1$, and thus 
    \[
     2^{k+1}\left| \sum_{a \in \rho_k}\pm_{a,x} \alpha_a  \right| =4|-\alpha_1-\alpha_2|=4|\alpha_1+\alpha_2|.
    \]
     \item If $x_0=1,x_1=1$, then $u_0(x)=1, u_1(x)=1, u_0(x)u_1(x)=1$, and thus 
    \[
     2^{k+1}\left| \sum_{a \in \rho_k}\pm_{a,x} \alpha_a  \right| =4| \alpha_1+\alpha_2|,
    \]
\end{enumerate}
which completes the proof.
\end{proof}
Therefore, the $A_2$ case displays how $\lac$ seems to be more sensitive to the coefficients by allowing one to vary them to impact the entire quantity rather than only being able to compare coefficients pairwise in $\ldc.$  Let's now use these formulas to find many elements in $A_2$ that separate $\lac$ and $\ldc.$
\begin{theorem}\label{t:lip-sep}
If $f=\alpha_0u_0+\alpha_1u_1+\alpha_2 u_0 u_1$ such that $\alpha_0, \alpha_1, \alpha_2 \in \R$ with 
\[
\alpha_0>\alpha_1+\alpha_2 \text{ and } \alpha_1=\alpha_2>0,
\]
then
\[
\ldc(f)=\max\{ 2(\alpha_0 + \alpha_1), 8 \alpha_1\}
\]
and 
\[
\lac(f)=2(\alpha_0+2\alpha_1)=2(\alpha_0+2\alpha_2)
\]
and $\ldc(f)<\lac(f).$
In particular, $\ldc(4u_0+u_1+u_0u_1)=10<12=\lac(4u_0+u_1+u_0u_1).$
\end{theorem}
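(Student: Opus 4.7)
The plan is to substitute the hypotheses $\alpha_1 = \alpha_2 > 0$ and $\alpha_0 > \alpha_1 + \alpha_2$ directly into the closed-form expressions for $\ldc(f)$ and $\lac(f)$ provided by Theorem \ref{t:ldc2-formula} and Theorem \ref{t:lac2-formula}, simplify each of the six candidate quantities in the max, and then compare the two resulting maxima.

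First I would address $\ldc(f)$. Substituting $\alpha_2 = \alpha_1$ collapses the six terms to three distinct absolute values: $2|\alpha_0 - \alpha_1|$ (the first two candidates coincide), $2|\alpha_0 + \alpha_1|$ (the third and fourth coincide), $4|\alpha_1 - \alpha_2| = 0$, and $4|\alpha_1 + \alpha_2| = 8\alpha_1$. Because $\alpha_0 > \alpha_1 + \alpha_2 > 0$ and $\alpha_1 > 0$ we may drop the absolute values, and $2(\alpha_0 + \alpha_1)$ dominates $2(\alpha_0 - \alpha_1)$. This yields the stated formula $\ldc(f) = \max\{2(\alpha_0 + \alpha_1),\, 8\alpha_1\}$.

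Next I would perform the same substitution in Theorem \ref{t:lac2-formula}. The terms $2|\alpha_0 + \alpha_1 - \alpha_2|$ and $2|\alpha_0 - \alpha_1 + \alpha_2|$ both become $2\alpha_0$; the term $2|\alpha_0 - \alpha_1 - \alpha_2|$ becomes $2(\alpha_0 - 2\alpha_1)$ (the hypothesis $\alpha_0 > 2\alpha_1$ removes the absolute value); $2|\alpha_0 + \alpha_1 + \alpha_2|$ becomes $2(\alpha_0 + 2\alpha_1)$; and the last two are $0$ and $8\alpha_1$ as before. Since $\alpha_1 > 0$, the value $2(\alpha_0 + 2\alpha_1)$ strictly exceeds $2\alpha_0$ and $2(\alpha_0 - 2\alpha_1)$; and $\alpha_0 > 2\alpha_1$ forces $2(\alpha_0 + 2\alpha_1) > 8\alpha_1$. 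Hence $\lac(f) = 2(\alpha_0 + 2\alpha_1)$, which equals $2(\alpha_0 + 2\alpha_2)$ trivially.

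Finally, to establish $\ldc(f) < \lac(f)$ I would split on which candidate realizes the $\ldc$ maximum. If $2(\alpha_0 + \alpha_1) \geq 8\alpha_1$, i.e.\ $\alpha_0 \geq 3\alpha_1$, then $\lac(f) - \ldc(f) = 2(\alpha_0 + 2\alpha_1) - 2(\alpha_0 + \alpha_1) = 2\alpha_1 > 0$. Otherwise $\ldc(f) = 8\alpha_1$ and $\lac(f) - \ldc(f) = 2\alpha_0 + 4\alpha_1 - 8\alpha_1 = 2(\alpha_0 - 2\alpha_1) > 0$ by hypothesis. The specific example with $\alpha_0 = 4, \alpha_1 = \alpha_2 = 1$ lies in the first case: $\ldc(f) = \max\{10, 8\} = 10 < 12 = 2(4+2) = \lac(f)$. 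There is no real obstacle here beyond careful bookkeeping of signs and the two-case comparison at the end; the work has essentially been done in the preceding formula theorems, and this statement exists to exhibit an explicit, verifiable separating family.
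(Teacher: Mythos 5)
Your proposal is correct and follows essentially the same route as the paper: substitute $\alpha_1=\alpha_2>0$ and $\alpha_0>2\alpha_1$ into the six-term maxima of Theorems \ref{t:ldc2-formula} and \ref{t:lac2-formula}, strip the absolute values, and observe that every candidate for $\ldc(f)$ is strictly below $2(\alpha_0+2\alpha_1)=\lac(f)$. Your final two-case split on which term realizes $\ldc(f)$ is just a repackaging of the paper's observation that both $2(\alpha_0+\alpha_1)$ and $8\alpha_1$ are strictly less than $\lac(f)$.
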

\begin{proof}
First, we consider $\ldc$. First, $\alpha_0-\alpha_2>\alpha_1>0$, so $2|\alpha_0-\alpha_2|=2(\alpha_0-\alpha_2)< 2(\alpha_0+\alpha_2)=2(\alpha_0+\alpha_1)$ since $\alpha_2>0.$ Similarly, $2|\alpha_0-\alpha_1|=2(\alpha_0-\alpha_1)< 2(\alpha_0+\alpha_1).$ Also, $2|\alpha_0+\alpha_1|=2(\alpha_0+\alpha_1)$ and $2|\alpha_0+\alpha_2|=2(\alpha_0+\alpha_2)=2(\alpha_0+\alpha_1)$.  Now, $4|\alpha_1-\alpha_2|=0$ and $4|\alpha_1+\alpha_2|=8\alpha_1.$ This proves the formula for $\ldc(f).$

Second, we consider $\lac$. Now, $\alpha_0+\alpha_1-\alpha_2>\alpha_1+\alpha_2+\alpha_1-\alpha_2=2\alpha_1>0$.  Thus $2|\alpha_0+\alpha_1-\alpha_2|=2(\alpha_0+\alpha_1-\alpha_2)< 2(\alpha_0+\alpha_1+\alpha_2)$ since $\alpha_2>0.$ Similarly $2|\alpha_0-\alpha_1+\alpha_2|=2(\alpha_0-\alpha_1+\alpha_2)< 2(\alpha_0+\alpha_1+\alpha_2)$ and $2|\alpha_0-\alpha_1-\alpha_2|=2(\alpha_0-\alpha_1-\alpha_2)< 2(\alpha_0+\alpha_1+\alpha_2).$ However  $2(\alpha_0+\alpha_1+\alpha_2)=2(\alpha_0+2\alpha_1)=2(\alpha_0+2\alpha_2).$ Now $4|\alpha_1-\alpha_2|=0$ and $4|\alpha_1+\alpha_2|=8\alpha_1.$ But $2(\alpha_0+\alpha_1+\alpha_2)>2(\alpha_1+\alpha_2+\alpha_1+\alpha_2)=2(4\alpha_1)=8\alpha_1, $
which proves the formula for $\lac(f).$ 

Next, $2(\alpha_0+\alpha_1)<2(\alpha_0+2\alpha_1)$ since $\alpha_1>0.$ And, we already showed that $8\alpha_1<2(\alpha_0+2\alpha_1),$ which establishes $\ldc(f)<\lac(f)$.  Lastly, $\alpha_0=4, \alpha_1=1, \alpha_2=1$ satisfy the assumption and completes the proof. 
\end{proof}

Although we showed that $\ldc$ and $\lac$ separate on $A_2$, the fact that $A_2$ is finite-dimensional still allows us to compare $\ldc$ and $\lac$. Indeed, since $\ldc$ and $\lac$ vanish on the same subspace and $A_2$ is finite-dimensionl, we have that  $\ldc$ and $\lac$ are equivalent since all norms on finite-dimensional vector spaces are equivalent (see \cite[Theorem III.3.1]{Conway90}) and $\ldc$ and $\lac$ are norms on the quotient space (which is still finite-dimensional) given by the subspace they vanish on. However, this result is about existence and doesn't provide a way to find explicit constants for equivalence. So, in Theorem \ref{t:a2-equiv}, we find such constants.  First, we present some basic inequaliites. 
\begin{lemma}\label{l:frd-rk-sn}
If $x,y \in \C$, then 
\[
|x|\leq \max\{ |x+y|, \ |x-y|\}.
\]
\end{lemma}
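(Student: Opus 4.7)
The plan is to exploit the identity $x = \tfrac{1}{2}\bigl((x+y) + (x-y)\bigr)$, which expresses $x$ as an average of $x+y$ and $x-y$. This reduces the claim to a one-line application of the triangle inequality combined with the trivial bound that the average of two real numbers is at most their maximum.

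Concretely, first I would write $x = \tfrac{1}{2}(x+y) + \tfrac{1}{2}(x-y)$, then apply the triangle inequality in $\C$ to obtain
\[
|x| \leq \tfrac{1}{2}|x+y| + \tfrac{1}{2}|x-y|.
\]
Next I would bound each of $|x+y|$ and $|x-y|$ above by $\max\{|x+y|,|x-y|\}$, giving
\[
|x| \leq \tfrac{1}{2}\max\{|x+y|,|x-y|\} + \tfrac{1}{2}\max\{|x+y|,|x-y|\} = \max\{|x+y|,|x-y|\}.
\]
This completes the proof.

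There is essentially no obstacle here: the only insight needed is the averaging identity, after which the triangle inequality finishes the job. The lemma is clearly a preparatory step for estimating expressions like $|\alpha_0|$ in terms of sums such as $|\alpha_0 \pm \alpha_1|$ (and similar), which appear in the formulas for $\ldc$ and $\lac$ on $A_2$ from Theorem \ref{t:ldc2-formula} and Theorem \ref{t:lac2-formula}, so subsequent lemmas will likely iterate this basic bound to control coefficients of elements in $A_2$ in terms of the Lip-norm values.
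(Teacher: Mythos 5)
Your proof is correct and is essentially the paper's own argument: the paper writes $2|x| = |(x+y)+(x-y)| \leq |x+y|+|x-y| \leq 2\max\{|x+y|,|x-y|\}$, which is your averaging identity scaled by $2$. No differences worth noting.
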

\begin{proof}
We have
\begin{align*}
    2|x|&=|2x|=|x+x|=|x+y-y+x|\leq |x+y|+|x-y|\\
    &\leq 2\max\{ |x+y|, \ |x-y|\},
\end{align*}
which implies $|x| \leq \max\{ |x+y|, \ |x-y|\}.$
\end{proof}
\begin{lemma}\label{l:frd-rx-n-l"t"}
If $x,y,z \in \C,$ then 
\[
|x+y-z|\leq  \max\{2|x+y|, \ 2|x+z|, \ 2|x-z|\}.
\]
\end{lemma}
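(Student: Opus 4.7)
The plan is to reduce this inequality to a single algebraic identity combined with the triangle inequality, in the same spirit as the proof of Lemma \ref{l:frd-rk-sn}. The key observation is that $2(x+y-z)$ can be written as an integer linear combination of the three quantities $x+y$, $x+z$, and $x-z$. Matching coefficients of $x$, $y$, and $z$ forces the unique identity
\[
2(x+y-z) = 2(x+y) - (x+z) + (x-z),
\]
which one verifies by direct expansion. Applying the triangle inequality then yields
\[
2|x+y-z| \leq 2|x+y| + |x+z| + |x-z| \leq 4\max\{|x+y|,\ |x+z|,\ |x-z|\},
\]
and dividing by $2$ produces the desired bound $|x+y-z| \leq 2\max\{|x+y|,\ |x+z|,\ |x-z|\}$, which is exactly $\max\{2|x+y|,\ 2|x+z|,\ 2|x-z|\}$.

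I do not expect any serious obstacle here. The only step requiring any thought is discovering the coefficients $(2,-1,1)$, but these are forced by solving the $3\times 3$ linear system obtained from matching coefficients of $x$, $y$, $z$ on both sides, so the identity is essentially unique. An alternative route would be to invoke Lemma \ref{l:frd-rk-sn} directly with a cleverly chosen auxiliary $w$, but the direct identity is both shorter and avoids any case analysis, so I would present the proof in the form above.
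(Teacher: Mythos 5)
Your proof is correct, and it is essentially the paper's argument in disguise: the paper bounds $|x+y-z|\leq |x+y|+|z|$ and then invokes Lemma \ref{l:frd-rk-sn} to get $|z|\leq\max\{|x+z|,|x-z|\}$, which, when unrolled, is exactly your identity $2(x+y-z)=2(x+y)-(x+z)+(x-z)$ followed by the triangle inequality. Your version merely inlines the appeal to the previous lemma; both yield the same constant $2$.
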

\begin{proof}
We have
\begin{align*}
    |x+y-z|&\leq |x+y|+|z|  \leq 2\max\{|x+y|,\ |z|\}\\
    & \leq 2\max\{|x+y|, \ \max\{|x+z|, \ |x-z|\}\}\\
    & = 2\max\{|x+y|, \ |x+z|, \ |x-z|\},
\end{align*}
where the second to last line is provided by Lemma \ref{l:frd-rk-sn}. 
\end{proof}

\begin{theorem}\label{t:a2-equiv}
If $f \in A_2,$ then 
\[
\ldc(f) \leq \lac(f) \leq 2\ldc(f).
\]
\end{theorem}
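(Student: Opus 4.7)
The plan is to write $f = \alpha_0 u_0 + \alpha_1 u_1 + \alpha_2 u_0 u_1$ as in Theorem \ref{t:ldc2-formula} and Theorem \ref{t:lac2-formula} (ignoring the $1_{C(\mathcal{C})}$ coefficient as explained in Corollary \ref{c:l-agree}), and then compare the two max-expressions term-by-term. Both maxes share the summands $4|\alpha_1 - \alpha_2|$ and $4|\alpha_1 + \alpha_2|$, so the whole problem reduces to comparing the four ``pairwise'' terms $2|\alpha_0 \pm \alpha_i|$ appearing in $\ldc(f)$ against the four ``triple'' terms $2|\alpha_0 \pm \alpha_1 \pm \alpha_2|$ appearing in $\lac(f)$.

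For the left inequality $\ldc(f) \leq \lac(f)$, the idea is to bound each pairwise term $2|\alpha_0 - \alpha_2|$, $2|\alpha_0 - \alpha_1|$, $2|\alpha_0 + \alpha_1|$, $2|\alpha_0 + \alpha_2|$ by applying Lemma \ref{l:frd-rk-sn} with appropriate choices of $x$ and $y$: for instance, $|\alpha_0 - \alpha_2| \leq \max\{|\alpha_0 - \alpha_2 + \alpha_1|, |\alpha_0 - \alpha_2 - \alpha_1|\}$, which puts $2|\alpha_0 - \alpha_2|$ beneath two of the triple terms that appear in $\lac(f)$. Each of the other three pairwise terms is handled analogously by inserting $\pm \alpha_2$ or $\pm \alpha_1$. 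Since the two ``4-terms'' already appear in both expressions, the desired inequality follows by taking the max.

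For the right inequality $\lac(f) \leq 2\ldc(f)$, the key tool is Lemma \ref{l:frd-rx-n-l"t"}, which bounds an expression of the form $|x + y - z|$ by $2$ times a max of three two-variable expressions. Applied to each of the four triple terms $|\alpha_0 + \alpha_1 - \alpha_2|$, $|\alpha_0 - \alpha_1 + \alpha_2|$, $|\alpha_0 - \alpha_1 - \alpha_2|$, $|\alpha_0 + \alpha_1 + \alpha_2|$ (with appropriate assignments of signs to $x$, $y$, $z$), this yields a bound by $2 \cdot 2 \cdot \max$ of three of the pairwise quantities of $\ldc(f)$; multiplying by the prefactor $2$ from $\lac$ and absorbing the factor of $2$ into the coefficient, each triple $2$-term of $\lac(f)$ is bounded by $2\ldc(f)$. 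The two $4$-terms give equality rather than inequality, so they trivially satisfy the bound.

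The main bookkeeping obstacle is just choosing the right assignment of signs in each application of Lemma \ref{l:frd-rx-n-l"t"} so that the resulting max hits terms that actually appear in $\ldc(f)$; there is no conceptual difficulty, it is a routine case check once the sign choices are lined up. After verifying all four cases for each inequality, taking the outer maximum over all terms completes the proof.
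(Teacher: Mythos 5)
Your proposal is correct and follows essentially the same route as the paper: reduce to the coefficients $\alpha_0,\alpha_1,\alpha_2$ via Theorems \ref{t:ldc2-formula} and \ref{t:lac2-formula}, note the shared $4|\alpha_1\pm\alpha_2|$ terms, and then bound each pairwise term by triple terms using Lemma \ref{l:frd-rk-sn} for the left inequality and each triple term by pairwise terms using Lemma \ref{l:frd-rx-n-l"t"} for the right. The sign bookkeeping you defer is exactly the case check carried out in the paper's proof, so there is no gap.
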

\begin{proof}
We begin with the first inequality.  Since $\ldc(1_{C(\mathcal{C})})=\lac(1_{C(\mathcal{C})})=0$ since they are Lip-norms, we only need to consider $f \in A_2$ such that $f=\alpha_0u_0+\alpha_1u_1+\alpha_2u_0u_1$ for some $\alpha_0, \alpha_1, \alpha_2 \in \C$ by the argument of Corollary \ref{c:l-agree}.

By Lemma \ref{l:frd-rk-sn} and Theorem \ref{t:lac2-formula}, we have
\[
2|\alpha_0-\alpha_2| \leq \max\{ 2|\alpha_0-\alpha_1-\alpha_2|, \ 2|\alpha_0+\alpha_1-\alpha_2|\}\leq \lac(f),
\]
\[
2|\alpha_0-\alpha_1| \leq \max\{ 2|\alpha_0-\alpha_1-\alpha_2|, \ 2|\alpha_0-\alpha_1+\alpha_2|\}\leq \lac(f),
\]
\[
2|\alpha_0+\alpha_1| \leq \max\{ 2|\alpha_0+\alpha_1-\alpha_2|, \ 2|\alpha_0+\alpha_1+\alpha_2|\}\leq \lac(f),
\]
\[
2|\alpha_0+\alpha_2| \leq \max\{ 2|\alpha_0+\alpha_1+\alpha_2|, \ 2|\alpha_0-\alpha_1+\alpha_2|\}\leq \lac(f),
\]
and 
\[ 4|\alpha_0-\alpha_2|\leq \lac(f) \ \text{ and } \ 4|\alpha_0+\alpha_2|\leq \lac(f). \]
Hence, by Theorem \ref{t:ldc2-formula}, each term defining $\ldc(f)$ is less than or equal to $\lac(f),$ and thus the maximum, $\ldc(f)$, is less than or equal to $\lac(f)$.  Therefore, 
\[
\ldc(f)\leq \lac(f).
\]

Next, we consider the second inequality. By Lemma \ref{l:frd-rx-n-l"t"} and Theorem \ref{t:ldc2-formula}, we have
\begin{align*}
2|\alpha_0+\alpha_1-\alpha_2|&\leq 2\max\{2|\alpha_0+\alpha_1|, \ 2|\alpha_0+\alpha_2|, \ 2|\alpha_0-\alpha_2| \}\\
& \leq 2\max\{\ldc(f), \ \ldc(f), \ \ldc(f) \}=2\ldc(f),
\end{align*}
\[
2|\alpha_0-\alpha_1+\alpha_2|\leq 2\max\{2|\alpha_0-\alpha_1|, \ 2|\alpha_0+\alpha_2|, \ 2|\alpha_0-\alpha_2| \} \leq  2\ldc(f),
\]
\[
2|\alpha_0-\alpha_1-\alpha_2|\leq 2\max\{2|\alpha_0-\alpha_1|, \ 2|\alpha_0-\alpha_2|, \ 2|\alpha_0+\alpha_2| \} \leq  2\ldc(f),
\]
\[
2|\alpha_0+\alpha_1+\alpha_2|\leq 2\max\{2|\alpha_0+\alpha_1|, \ 2|\alpha_0+\alpha_2|, \ 2|\alpha_0-\alpha_2| \} \leq  2\ldc(f),
\]
and 
\[ 4|\alpha_0-\alpha_2|\leq \ldc(f) \ \text{ and } \ 4|\alpha_0+\alpha_2|\leq \ldc(f). \]
Hence, by Theorem \ref{t:ldc2-formula}, each term defining $\lac(f)$ is less than or equal to $2\ldc(f),$ and thus the maximum, $\lac(f)$, is less than or equal to $2\ldc(f)$.  Therefore, 
\[
\lac(f)\leq 2\ldc(f),
\]
which completes the proof.
\end{proof}

Thus, we have successfully separated $\lac$ and $\ldc$, while also discovering some interesting structural differences and similarities between the two. One route to go next would be to see if we can continue finding equivalence constants for higher dimensional spaces than $A_2$. We note that we are not even certain if we have found the tightest constant on the right inequality in Theorem \ref{t:a2-equiv}. It may be that the tighter number is less than $2$. Another route is to compare the domains  $\dom{\lac}$ and $\dom{\ldc}.$  Yes, Theorem \ref{t:lac} and Theorem \ref{t:domain} show that $\cup_{n \in \N} A_n \subseteq \dom{\lac}\cap\dom{\ldc}$, but this doesn't mean the domains are necessarily equal. Our formulas for $\lac$ and $\ldc$ (Theorem \ref{t:lac-formula} and Theorem \ref{t:ldc-formula}, respectively) may be the key to figuring this out. The formula for $\lac$ will continue to consider more and more coefficients as the dimension approaches infinity in comparison to $\ldc$. Thus, it may be the case that value approaches infinity while the other stays bounded, which would establish difference in domains.
\bibliographystyle{plain}
\bibliography{thesis-a}

\end{document}